\DeclareMathOperator{\Ch}{Ch}
\DeclareMathOperator{\CH}{CH}
\DeclareMathOperator{\id}{id}
\DeclareMathOperator{\Spec}{Spec}
\DeclareMathOperator{\im}{im}
\DeclareMathOperator{\Cht}{\widetilde{\Ch}}
\DeclareMathOperator{\CK}{CK}
\DeclareMathOperator{\Ck}{Ck}
\DeclareMathOperator{\coker}{coker}
\DeclareMathOperator{\rank}{rank}
\DeclareMathOperator{\Homm}{Hom}
\newcommand{\K}{K}
\newcommand{\Kg}[2]{K'_{#2}({#1})}
\newcommand{\Kh}[2]{K_{#2}({#1})}
\newcommand{\Khs}[3]{K_{#3}^{#2}({#1})}
\newcommand{\Var}{\mathsf{Var}}
\DeclareMathOperator{\gr}{gr}
\newcommand{\Kzl}{\K_0}
\newcommand{\Kzh}{\K_0}
\newcommand{\Zu}{\Z[1/k]}
\DeclareMathOperator{\Fi}{F}
\newcommand{\Figh}[3]{\Fi_{\gamma}^{#1} \K_{#3}({#2})}
\newcommand{\Gm}{\mathbb{G}_m}
\newcommand{\Tan}{T}
\newcommand{\PP}{\mathbb{P}}
\newcommand{\Aa}{\mathbb{A}^1}
\DeclareMathOperator{\Square}{Sq}
\newcommand{\Squ}{\Square_1}
\newcommand{\Squu}{\Square^1}
\newcommand{\colim}[1]{\mycolim_{#1}}
\DeclareMathOperator*{\mycolim}{colim}
\newcommand{\Ec}{\mathcal{E}}
\newcommand{\Fc}{\mathcal{F}}
\newcommand{\Cc}{\mathcal{C}}
\renewcommand{\Mc}{\mathcal{M}}
\newcommand{\Nn}{\mathcal{N}}
\newcommand{\by}{\beta^{\infty}}
\newcommand{\sik}{\tau_k}
\newcommand{\siu}{\tau_{-1}}
\newcommand{\Z}{\mathbb{Z}}
\newcommand{\point}{\mathsf{point}}
\newcommand{\Oc}{\mathcal{O}}
\newcommand{\Ab}{\mathsf{Ab}}
\newtheorem{theorem}{Theorem}[section]
\newtheorem{proposition}[theorem]{Proposition}
\newtheorem{lemma}[theorem]{Lemma}
\newtheorem{corollary}[theorem]{Corollary}
\theoremstyle{definition}
\newtheorem{remark}[theorem]{Remark}
\newtheorem{example}[theorem]{Example}
\newtheorem{definition}[theorem]{Definition}
\begin{document}
\begin{abstract}
We investigate some relations between the duality and the topological filtration in algebraic $K$-theory. As a result, we obtain a construction of the first Steenrod square for Chow groups modulo two of varieties over a field of arbitrary characteristic. This improves previously obtained results, in the sense that it is not anymore needed to mod out the image modulo two of torsion integral cycles. Along the way we construct a lifting of the first Steenrod square to algebraic connective $K$-theory with integral coefficients, and homological Adams operations in this theory. Finally we provide some applications to the Chow groups of quadrics.
\end{abstract}
\author{Olivier Haution}
\title{Duality and the topological filtration}
\email{olivier.haution@gmail.com}
\address{Mathematisches Institut, Ludwig-Maximilians-Universit\"at M\"unchen, Theresienstr.\ 39, D-80333 M\"unchen, Germany}

\subjclass[2010]{14C25}

\keywords{Steenrod operations, Chow groups, connective $K$-theory, Quillen spectral sequence, Adams operations}
\date{\today}

\maketitle
\setcounter{tocdepth}{1}
\tableofcontents

\section{Introduction}
In this text we construct one of the Steenrod operations for Chow groups modulo two. These operations have proven to be efficient tools in the study of splitting properties of projective homogeneous varieties. Some striking examples related to the theory of quadratic forms can be found in the book~\cite{EKM}.\\ 

Let $\Ch$ denote the Chow group modulo two, considered as a functor from the category of varieties and projective morphisms to the category of graded abelian groups. Let $\Cht$ be its quotient by the image of torsion integral cycles. In \cite{firstsq}, we have constructed a natural transformation, for every integer $p$,
\[
\Squ \colon \Ch_p \to \Cht_{p-1}.
\]

Although this construction is sufficient for the applications that we have in mind, it is not the strongest possible form of this operation. As mentioned in \cite[Remark~5.1]{firstsq}, one expects this operation to lift to an operation
\[
\Squ \colon \Ch_p \to \Ch_{p-1}.
\]

This is known over fields of characteristic not two \cite{Bro-St-03,EKM,Lev-St-05,Vo-03}. The main purpose of this paper is to provide such a lifting over an arbitrary field. This is satisfying from a theoretical point of view, and realizing the first Steenrod square as an endomorphism of some group allows us to discuss the validity of the Adem relation $\Squ \circ \Squ=0$ in the end of the text.\\

Another motivation is that Steenrod operations constitute one of the rare tools which can provide useful informations about the torsion cycles. Therefore it seems to us interesting to dispose of a version of the first Steenrod square which can produce torsion cycles (modulo two). There is an epimorphism
\[
\varphi \colon \Ch_p \to \Z/2 \otimes \gr_p \Kzl',
\]
from the modulo two Chow group to the graded group associated with the topological filtration (modulo two). Its kernel is certainly a subtle object in general, and the first Steenrod square can actually provide some information about it. Its elements are always the image modulo two of torsion integral cycles \cite[Proposition~4.1, (vi)]{reduced}, but this condition is not sufficient (see for instance \eqref{Chxrho} and \eqref{Kxrho} in this text). We prove that the first Steenrod square descends to a map
\[
\Z/2 \otimes \gr_p \Kzl' \to \Z/2 \otimes \gr_{p-1} \Kzl'.
\]
Even over fields of characteristic zero, this statement seems new (and is indeed not true for all Steenrod squares, see Remark~\ref{rem:other}). An additional necessary condition for an element to vanish under $\varphi$ is therefore that its image under the first Steenrod square admits an integral lifting which is torsion. In the last section we describe an explicit situation where these two conditions are sufficient (and non-redundant). We also use the first Steenrod square to produce $2$-torsion elements in the first Chow group of a large class of quadrics.\\

We proceed by lifting the involution $\psi_{-1}$ induced by duality on $K$-theory to algebraic connective $K$-theory $\CK$, a ``deformation'' between $K$-theory and Chow groups introduced in \cite{Cai}. This allows us to define another operation $\siu$ on $\CK$, which happens to be an integral lifting of the first Steenrod square from Chow groups modulo two to algebraic connective $K$-theory. The existence of this operation seems to be a new result, at least over fields of positive characteristic (in characteristic zero, one can use algebraic cobordism, which, as a general fact, provides an efficient way to construct operations on oriented Borel-Moore functors). Rather than using Grothendieck duality, we derive the properties of $\psi_{-1}$ from the fact that it is an Adams operation. Indeed most of the time we consider the $k$-th Adams operation ($k \in \Z-\{0\}$); only in the very end do we take $k=-1$.

We also construct the $k$-th homological Adams operation for connective $K$-theory (over an arbitrary field); this has been done in characteristic zero in \cite{MalagonLopez}.\\

{\bf Acknowledgements.} Making the Adams operations act on Quillen spectral sequence is certainly very classical. A recent illustration can be found in \cite{Mer-BGQ}, whence we borrowed some of the arguments and references used in this text. The idea of constructing the first Steenrod square using the duality theory for schemes was inspired by \cite{Totaro-04}. I am very grateful to Nikita Karpenko, who drew my attention to connective $K$-theory and Adams operations while I was trying to construct Steenrod operations for Chow groups. Finally I would like to thank Baptiste Calm\`es and the anonymous referee, who made suggestions yielding to improvements in the exposition. The support of EPSRC Responsive Mode grant EP/G032556/1 is gratefully acknowledged.

\section{Basic facts}
\subsection{Varieties} \label{sect:var} We work over a fixed base field, whose spectrum shall be denoted by $\point$. A \emph{variety} is a quasi-projective separated scheme of finite type over this field. If $x$ is point of a variety $X$, we write $\kappa(x)$ for the residue field at $x$.

When $\Omega$ is a variety, we consider the category $\Var/\Omega$. Objects are varieties $X$ endowed with a morphism of varieties $X \to \Omega$, and morphisms are the proper morphisms of varieties which respect the structure of $\Omega$-schemes. When $\Omega=\point$, we write $\Var$ for $\Var/\Omega$.
 
\subsection{Regularity} A \emph{local complete intersection morphism} is a morphism $f$ which admits a factorization $p \circ i$ with $i$ a regular closed embedding, and $p$ a smooth morphism. Its \emph{virtual tangent bundle} $\Tan_f \in \Kzl(X)$ is $i^*[\Tan_p]-[N_i]$, where $\Tan_p$ is the tangent bundle of $p$, and $N_i$ the normal bundle of $i$. This element does not depend on the choice of the factorization $p \circ i$. 

A variety $X$ is \emph{regular} if the rings $\Oc_{X,x}$ are regular local rings, for all points $x$ of $X$. Since $X$ is quasi-projective, there is a smooth variety $M$, and a closed embedding $X \hookrightarrow M$. This embedding has to be regular \cite[Proposition~2, \S5, N$^{\circ}$3, p.65]{Bou-AC-10}, hence the structural morphism $X \to \point$ is a local complete intersection morphism. We write $\Tan_X$ for its virtual tangent bundle.

\subsection{$K$-groups} If $V$ is a scheme, we write $\Kh{V}{m}$ for the $m$-th $K$-group of locally-free coherent $\Oc_V$-modules. These groups admit pull-back along arbitrary morphisms of schemes, and Adams operations $\psi^k$, for $k\in \Z-\{0\}$, which are compatible with the pull-back maps.

If $R$ is a commutative ring, we also write $\Kh{R}{m}$ for $\Kh{\Spec(R)}{m}$.

\subsection{$\K'$-groups}
If $X$ is a noetherian scheme, we denote by $\Kg{X}{m}$ the $m$-th $K$-group of the category $\Mc(X)$ of coherent $\Oc_X$-modules. For a proper morphism of noetherian schemes $f \colon Y \to X$, with $Y$ supporting an ample line bundle, there is a push-forward $f_* \colon \Kg{Y}{m} \to \Kg{X}{m}$. If $f \colon Y \to X$ is a morphism of finite Tor-dimension between noetherian schemes, and if $X$ supports an ample line bundle, there is a pull-back $f^* \colon \Kg{X}{m} \to \Kg{Y}{m}$. See \cite[\S 7, 2.]{Qui-72} for more details.

When $f$ is a regular closed embedding of varieties, there is an alternative construction of $f^*$ recalled in \ref{sect:def} below.

\subsection{Notations} \label{sect:notations} If $l \colon F \to G$ is a natural transformation between two functors from a category $\mathcal{A}$ to a category $\mathcal{B}$, and $A$ an object of $\mathcal{A}$, we will freely write either  $l$ of $l^A$ for the map $F(A) \to G(A)$, depending on the situation.

We denote by $\Ab$ the category of abelian groups. If $M\in \Ab$, $A=\Z[1/k]$ or $A=\Z/n$ (with $k,n$ integers), and $x \in M$, we will usually write $x$ for the element $1 \otimes x \in A \otimes M$. If $N$ is another abelian group, and $f \colon M \to N$ a group homomorphism, we write $f$ for the map $\id \otimes f \colon A \otimes M \to A \otimes N$.

If $F$ is a field, $L/F$ a field extension, and $X$ a variety over $F$, then the variety $X \times_{\Spec(F)} \Spec(L)$ shall be denoted by $X_L$. Similarly, if $x \in \Kg{X}{m}$, the element $x_L \in \Kg{X_L}{m}$ will denote the image of $x$ under the pull-back along the flat morphism $X_L \to X$.

\subsection{Quillen spectral sequence} \cite[\S7, Theorem~5.4]{Qui-72}\label{sect:QSS} Let $X$ be a variety. For every integer $p$, we write $\Mc_p(X)$ for the full subcategory of $\Mc(X)$, with objects the coherent $\Oc_X$-modules supported in dimension at most $p$. The category $\Mc_p(X)$ is a full subcategory of $\Mc_{p+1}(X)$, and a combination of localization and d\'evissage yields an exact sequence 
\[
\cdots \to E^1_{p+1,q} \to \K_{p+q}(\Mc_p(X)) \to \K_{p+q}(\Mc_{p+1}(X)) \to E^1_{p+1,q-1} \to \cdots
\]
with ($X_{(p)}$ being the set of points of dimension $p$ in $X$)
\begin{equation}
\label{eq:e1}
E^1_{p,q}=\coprod_{x \in X_{(p)}} \Kh{\kappa(x)}{p+q}.
\end{equation}

Thus we have an exact couple which induces Quillen spectral sequence
\[
E^1_{p,q} \Rightarrow \Kg{X}{p+q},
\]
the filtration on the abutment being the topological filtration. 

\subsection{Connective $K$-theory} \cite{Cai} \label{par:connective} Let $X$ be a variety. The groups
\[
\CK_{p,q}(X)=\im(\K_{p+q}(\Mc_p(X)) \to \K_{p+q}(\Mc_{p+1}(X)) )
\]
are the \emph{connective $K$-theory groups} of $X$. They fit in a long exact sequence
\[
\cdots \to \CK_{p-1,q+1}(X) \xrightarrow{\beta} \CK_{p,q}(X) \to E^2_{p,q} \to \CK_{p-2,q+1}(X) \xrightarrow{\beta} \CK_{p-1,q}(X)\to \cdots
\]
and we have \cite[Proposition~5.26]{Sri-96}
\[
E^2_{p,-p} \simeq \CH_p(X).
\]

We consider $\CK_{p,q}$ as a functor $\Var \to \Ab$. When $Y \hookrightarrow X$ is a closed embedding, there is a localization sequence \cite[Theorem~5.1]{Cai}
\[
\cdots \to \CK_{p+1,q}(X-Y) \to \CK_{p,q}(Y) \to  \CK_{p,q}(X)\to  \CK_{p,q}(X-Y)\to \cdots
\]

\subsection{External product}
The external product for the groups $\Kg{-}{m} , \CK_{p,q}(-)$ and $\K_{p+q}(\Mc_p(-))$ will always be denoted by the symbol $\times$.

\subsection{Deformation to the normal cone} \cite[Chapter~5]{Ful-In-98} \label{sect:def}
Let $f \colon Y \hookrightarrow X$ be a closed embedding of varieties, and $N_f$ its normal cone \cite[B.6.1]{Ful-In-98}. Consider the blow-up $B$ of $X \times \Aa$ along $Y \times \{0\}$. The exceptional divisor is the projective cone $\PP(N_f \oplus 1)$ over $Y$. There is a closed embedding $\PP(N_f) \hookrightarrow \PP(N_f \oplus 1)$, with open complement $N_f$. Let $D_f$ be the open complement in $B$ of $\PP(N_f)$. Then $D_f$ contains $N_f$ as a locally principal divisor, with open complement isomorphic to $\Gm \times X$. This gives a commutative diagram
\[
\xymatrix{
N_f \ar[r] & D_f & \Gm \times X \ar[l]\\
Y \ar[r] \ar[u] & \Aa \times Y \ar[u] & \Gm \times Y \ar[l] \ar[u]_{\id \times f}
}
\]
the arrow on the left being the zero section of the cone $N_f$. 

Let $F$ be the base field, $R=F[t,t^{-1}]$, so that $\Gm=\Spec(R)$. We have a map 
\[
R^{\times} = GL_1(R) \to GL_{\infty}(R) \to GL_{\infty}(R)/[GL_{\infty}(R),GL_{\infty}(R)]=\Kh{R}{1}.
\]
Since $\Gm$ is regular, we have $\Kh{R}{1}=\Kh{\Gm}{1}=\Kg{\Gm}{1}$, hence the element $t \in R^{\times}$ gives an element $\{t\} \in \Kg{\Gm}{1}$. We also denote by $\{t\}$ its image under the map $\Kg{\Gm}{1} \to \CK_{1,0}(\Gm)$. Let $\delta^f \colon \Kg{\Gm \times X}{m+1} \to \Kg{N_f}{m}$ (resp.\ $\partial^f\colon\CK_{p+1,q}(\Gm \times X) \to \CK_{p,q}(N_f)$) be the connecting homomorphism in the localization sequence for $(N_f,D_f)$. The \emph{deformation homomorphisms} of $f$ are
\[
\begin{array}{rrlrrrl}
		  \sigma^f \colon &\Kg{X}{m} \to & \Kg{N_f}{m} &\quad \text{and} \quad &\sigma^f \colon &\CK_{p,q}(X)\to&\CK_{p,q}(N_f)\\
		  &x \mapsto &\delta^f(\{t\} \times x)& \quad& &x \mapsto &\partial^f(\{t\} \times x).
       		\end{array}
\]

When $f$ is a regular closed embedding of relative dimension $d$, the cone $n \colon N_f \to Y$ is a vector bundle, and the pull-back $n^*$ is an isomorphism \cite[\S 7, Proposition~4.1]{Qui-72} (resp.\ \cite[Theorem~5.3]{Cai}). One defines the \emph{Gysin morphism} $f^* \colon \Kg{X}{m} \to \Kg{Y}{m}$ (resp.\ $\CK_{p,q}(X) \to \CK_{p-d,q+d}(Y)$) by the formula 
\[
f^*=(n^*)^{-1} \circ \sigma^f.
\]
See \cite[Proposition~86]{Gil-K-05} and \cite[\S 6.3]{Cai} for more details. 

\subsection{The map $\by$}
\label{sect:by} Let $X$ be a variety. Using the map $\beta \colon \CK_{p,q}(X) \to \CK_{p+1,q-1}(X)$, we get a map
\[
\by=\colim{n\geq 0}\beta^n \colon \CK_{p,q}(X) \to \Kg{X}{p+q}=\colim{n\geq 0}\CK_{p+n,q-n}(X).
\]
The next two statements are immediate.
\begin{proposition}
\label{prop:properflataction}
The map $\by$ is compatible with proper push-forwards, flat pull-backs, external products, and action of $\Kzl$.
\end{proposition}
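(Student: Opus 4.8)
The map $\by=\colim_{n\geq 0}\beta^n$ sends a class in $\Ck_{p,q}(X)$ to its image in $\Zu \otimes \Kg{X}{p+q}$ after applying the boundary map $\beta$ infinitely often; since $\beta$ raises the weight index and the target $\Kg{X}{p+q}$ is the stable value $\Kg{X}{p+q}=\Zu\otimes\Kg{X}{p+q}$ reached once the support dimension exceeds $\dim X$, this colimit stabilizes. The plan is to verify each of the four compatibilities in turn, in every case by comparing the structure maps on $\Ck_{\bullet,\bullet}$ with the corresponding structure maps on the stable groups $\Zu\otimes\K'_{\bullet}$, and using that $\beta$ itself already commutes with all of them.

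\begin{proof}
By construction, $\by$ is the colimit of the natural maps $\beta^n \colon \Ck_{p,q} \to \Ck_{p+n,q-n}$, whose target identifies with $\Zu \otimes \Kg{-}{p+q}$ once $p+n \geq \dim X$; indeed the map $j \colon \Zu\otimes \K_{p+q}(\Mc_{p+n}(-)) \to \Ck_{p+n,q-n}(-)$ becomes an isomorphism in that range, and $\K_{p+q}(\Mc_{p+n}(X)) = \Kg{X}{p+q}$ for $p+n \geq \dim X$. It therefore suffices to check that each of the four operations commutes with the boundary map $\beta \colon \Ck_{p,q} \to \Ck_{p+1,q-1}$ and is compatible with the identification of $\Ck_{p+n,q-n}$ with the stable $K'$-group; the compatibility with $\by$ then follows by passing to the colimit.

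Proper push-forward and flat pull-back are compatible with $\beta$ because $\beta=j\circ b$ in the notation of the diagram preceding Proposition~\ref{prop:varphi}, and both $b$ and $j$ are natural transformations of functors on $\Var/\Omega$; since $\Kg{-}{m}$ carries proper push-forwards and flat pull-backs, and the isomorphism $j$ at the stable level is just the identity of $\Kg{-}{p+q}$, the claim follows at once. For external products the argument is identical: the external product on $\Ck_{\bullet,\bullet}$ is compatible with $\beta$ via the last two formulas in the proof of Proposition~\ref{prop:extprod}, namely $i(u)\times i(v)=i\circ\beta(u\times v)$ and $j(u)\times j(v)=j(u\times v)$, and the external product on $\Zu\otimes\K'_\bullet$ is the one denoted $\times$ throughout. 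Finally, the action of $\Kzl$ on $\Ck_{\bullet,\bullet}$ is, by Lemma~\ref{lemm:action}, induced by an operation compatible with localization, hence commutes with $\beta$ by Proposition~\ref{prop:varphi}, and it agrees at the stable level with the usual $\Kzl$-module structure on $\Zu\otimes\K'_\bullet$. Passing to the colimit over $n$ in each case yields the four assertions.
\end{proof}

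The only point requiring any care is the stabilization claim—that for $n$ large the maps $\beta^n$ land in, and eventually become, the stable group $\Zu\otimes\Kg{X}{p+q}$—which is what licenses the passage to the colimit; once this is granted, every compatibility reduces to the already-recorded compatibility of $\beta$ with the relevant structure map.
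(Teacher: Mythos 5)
Your proof is correct and fills in exactly the routine verification the paper omits (the paper simply declares this statement immediate): identify the colimit target with $\Zu\otimes\Kg{-}{p+q}$ in the stable range and check each structure map commutes with $\beta$, since each is induced by the corresponding map on the $\K_*(\Mc_p(-))$ level. The only blemish is notational — $\beta$ is not $j\circ b$ but rather $j\circ i$, satisfying $\beta\circ j=j\circ b$ — and this does not affect the argument.
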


\begin{lemma}
\label{lemm:tby}
We have $\by\{t\}=\{t\}$ (see \ref{sect:def} for the definitions of $\{t\}$).
\end{lemma}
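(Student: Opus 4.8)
The crucial point is that $\Gm$ has dimension one. First I would record that, since $\dim \Gm = 1$, every coherent sheaf on $\Gm$ is supported in dimension at most one, so that $\Mc_n(\Gm) = \Mc(\Gm)$ for all $n \geq 1$; consequently $\K_1(\Mc_n(\Gm)) = \Kg{\Gm}{1}$, and the connective groups $\CK_{1+n,-n}(\Gm)$ all coincide with $\Kg{\Gm}{1}$ for every $n \geq 0$. In particular, writing $\widetilde{t} \in \Zu \otimes \K_1(\Mc_1(\Gm))$ for the class of $t \in F[t,t^{-1}]^{\times}$, the element $\{t\} \in \Ck_{1,0}(\Gm)$ is by construction $j(\widetilde{t})$, while $\widetilde{t}$ is nothing but the class $\{t\} \in \Zu \otimes \Kg{\Gm}{1}$.

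Next I would unwind the definition $\by = \colim{n} \beta^n$. Using the commutativity $\beta \circ j = j \circ b$ read off from the diagram preceding Proposition~\ref{prop:varphi}, an immediate induction gives $\beta^n \circ j = j \circ b^n$, where $b^n \colon \Zu \otimes \K_1(\Mc_1(\Gm)) \to \Zu \otimes \K_1(\Mc_{1+n}(\Gm))$ is the map induced by the inclusion of categories $\Mc_1(\Gm) \hookrightarrow \Mc_{1+n}(\Gm)$. Passing to the colimit over $n$, and observing that $\Mc_{1+n}(\Gm) = \Mc(\Gm)$ forces the relevant copies of $j$ to be identity maps, I would identify $\by \circ j$ with the canonical map $\Zu \otimes \K_1(\Mc_1(\Gm)) \to \Zu \otimes \Kg{\Gm}{1}$ induced by the inclusion into $\Mc(\Gm)$.

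Finally, because $\Mc_1(\Gm) = \Mc(\Gm)$, this canonical map is the identity of $\Zu \otimes \Kg{\Gm}{1}$, so that
\[
\by\{t\} = \by \circ j(\widetilde{t}) = \widetilde{t} = \{t\}.
\]
I do not expect any genuine obstacle here beyond carefully tracking the two incarnations of $\{t\}$ (in $\CK_{1,0}(\Gm)$ and in $\Kg{\Gm}{1}$) through the map $\by$; once the dimension-one collapse $\Mc_n(\Gm) = \Mc(\Gm)$ is recorded, the entire argument is formal.
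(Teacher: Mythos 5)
Your proof is correct and is exactly the unwinding the paper has in mind: the paper offers no written argument, declaring the statement immediate, and the reason it is immediate is precisely your observation that $\dim\Gm=1$ forces $\Mc_n(\Gm)=\Mc(\Gm)$ for $n\geq 1$, so that all the maps $\beta$ out of $\CK_{1,0}(\Gm)$ are identities and $\by$ fixes $\{t\}$. Your careful tracking of the two incarnations of $\{t\}$ through $j$ and $\by$ is a faithful expansion of what the paper leaves unsaid.
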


\begin{lemma}
\label{lemm:partialby}
Let $Y\hookrightarrow X$ be a closed embedding. We have a commutative diagram
\[ \xymatrix{
\CK_{p,q}(X-Y)\ar[rr]_{\by} \ar[d]_{\partial} && \Kg{X-Y}{p+q} \ar[d]_{\delta} \\ 
\CK_{p-1,q}(Y) \ar[rr]_{\by} && \Kg{Y}{p+q-1}
}\]
\end{lemma}
\begin{proof}
This follows from the construction of the connecting homomorphism $\partial$ in \cite[Theorem~5.1]{Cai} (see also \eqref{eq:deltabeta} below).
\end{proof}

\begin{lemma}
\label{lemm:deformby}
Let $f \colon Y\hookrightarrow X$ be a closed embedding with normal cone $N_f$. We have a commutative diagram
\[ \xymatrix{
\CK_{p,q}(X)\ar[rr]_{\by} \ar[d]_{\sigma^f} && \Kg{X}{p+q} \ar[d]_{\sigma^f} \\ 
\CK_{p,q}(N_f) \ar[rr]_{\by} && \Kg{N_f}{p+q}
}\]
\end{lemma}
\begin{proof}
Let $x \in \CK_{p,q}(X)$. Then, using the notations of \ref{sect:def}, Proposition~\ref{prop:properflataction}, Lemmas~\ref{lemm:tby} and \ref{lemm:partialby}, we have in $\Kg{N_f}{p+q}$
\begin{align*} 
\by \circ \sigma^f(x)&=\by \circ \partial^f (\{t\} \times x)\\
&=\delta^f \circ \by(\{t\} \times x)\\
&=\delta^f(\{t\} \times \by(x))\\
&=\sigma^f \circ \by(x).\qedhere
\end{align*}
\end{proof}

\begin{proposition}
The map $\by$ is compatible with pull-backs along local complete intersection morphisms.
\end{proposition}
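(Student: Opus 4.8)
The plan is to reduce the statement to the two building blocks out of which every local complete intersection pull-back is assembled: flat (in particular smooth) morphisms and regular closed embeddings. Recall that any local complete intersection morphism $f$ factors as $f = g \circ i$ with $i$ a regular closed embedding and $g$ smooth, and that the Gysin pull-back is functorial, so that $f^* = i^* \circ g^*$ on both connective $K$-theory and $\Zu \otimes \Kg{-}{m}$. Hence, once $\by$ is known to commute with $i^*$ and with $g^*$ separately, the relation $\by \circ f^* = \by \circ i^* \circ g^* = i^* \circ \by \circ g^* = i^* \circ g^* \circ \by = f^* \circ \by$ follows formally.

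The smooth case requires nothing new: a smooth morphism is flat, so its pull-back commutes with $\by$ by Proposition~\ref{prop:properflataction}. Thus the whole content lies in the case of a regular closed embedding $i \colon Y \hookrightarrow X$, whose normal bundle projection I write $p \colon N_i \to Y$. Here I would use the defining formula $i^* = (p^*)^{-1} \circ \sigma^i$ from \ref{sect:def}, together with the two compatibilities already at hand: Lemma~\ref{lemm:deformby}, which states that $\by$ commutes with the deformation homomorphism $\sigma^i$, and Proposition~\ref{prop:properflataction}, which gives that $\by$ commutes with the flat pull-back $p^*$ (the projection of a vector bundle being smooth).

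The one subtlety is that we need $\by$ to commute with $(p^*)^{-1}$, not merely with $p^*$. Since $p^*$ is an isomorphism on both theories and satisfies $\by \circ p^* = p^* \circ \by$, conjugating yields $(p^*)^{-1} \circ \by = \by \circ (p^*)^{-1}$. I would then simply compute
\[
\by \circ i^* = \by \circ (p^*)^{-1} \circ \sigma^i = (p^*)^{-1} \circ \by \circ \sigma^i = (p^*)^{-1} \circ \sigma^i \circ \by = i^* \circ \by,
\]
which settles the regular closed embedding case and, via the reduction above, the general statement.

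Because the substantive geometric input, namely the behaviour of $\by$ under deformation to the normal cone, has already been isolated in Lemma~\ref{lemm:deformby}, I do not expect a genuine obstacle here: the argument is a formal assembly of results already in place. The only point deserving care is to make sure the factorization $f = g \circ i$ is chosen compatibly in the two theories, which is ensured by the independence of the Gysin pull-back from the chosen factorization.
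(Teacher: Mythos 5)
Your argument is correct and is exactly the paper's proof written out in full: the paper's own two-sentence justification simply observes that lci pull-backs are built from deformation homomorphisms and flat pull-backs and cites Lemma~\ref{lemm:deformby} and Proposition~\ref{prop:properflataction}, which is precisely the reduction you carry out. The extra care you take with $(p^*)^{-1}$ and with the factorization $f = g\circ i$ fills in details the paper leaves implicit, and introduces no new ideas or gaps.
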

\begin{proof}
This follows from Lemma~\ref{lemm:deformby} and Proposition~\ref{prop:properflataction}, since such pull-backs are constructed using deformation homomorphisms and flat pull-backs (see \ref{sect:def}).
\end{proof}

\section{Homological Adams operations}
The letter $m$ will denote an integer $\geq 0$, and $k$ will be an element of $\Z - \{0\}$.\\

We begin by recalling the definition of Bott's class $\theta^k$ which, in the terminology of \cite{Pan-Ri-02}, plays the role of the inverse Todd genus of the Adams operation $\psi^k$. When $k \in \Z -\{0\}$, consider the Laurent polynomial in the variable $u$
\[
t^k(u)=\frac{1-u^k}{1-u}=u^0 + \cdots + u^{k-1}.
\]
There exists a Laurent polynomial $r$ such that
\[
t^k(u)=k + (1-u) \cdot r(u).
\]
Thus if $v$ is an element in a commutative ring $R$ such that $1-v$ is nilpotent, then 
\[
t^k(v) \in (\Zu \otimes R)^{\times}.
\]

Let $\mathcal{S}$ be the full subcategory of schemes with objects the noetherian schemes possessing an ample line bundle. If $X \in \mathcal{S}$, and $L$ is a line bundle over $X$, we write $c_1(L)$ for the element $1 -[L^\vee] \in \Kzl(X)$. It is nilpotent by \cite[Corollary~3.10]{FL-Ri-85}. We define a morphism of presheaves of groups on $\mathcal{S}$
\[
\theta^k \colon \Kzl \to (\Zu \otimes \Kzl)^{\times},
\]
by the requirement that, for any line bundle $L$,
\[
\theta^k(L)=t^k[L^\vee].
\]
We have
\[
c_1(L) \cdot \theta^k(L)=c_1(L^{\otimes k}).
\]

Let $X \in \mathcal{S}$, and $y \in \Kzl(X)$. Using the splitting principle, we see that
\begin{equation}
\label{eq:thetakn}
\rank ( \theta^k(y) )=k^{\rank(y)}.
\end{equation}

If $M$ is a $\Kzl(X)$-module, then we denote by $\theta^k(y)$ the automorphism of $\Zu \otimes M$ given by the action of $\theta^k(y) \in (\Zu \otimes \Kzl(X))^{\times}$.\\

Let $X \hookrightarrow M$ be a closed embedding of varieties, with $M$ regular. There is a notion of \emph{$\K$-groups of $M$ with supports in $X$} \cite[\S4.2]{Sou-Op-85}, denoted $\Khs{M}{X}{m}$, and fitting in long localization sequences
\[
\cdots \to \Kh{M-X}{m+1} \to \Khs{M}{X}{m} \to \Kh{M}{m} \to \Kh{M-X}{m} \to \cdots
\]

If $i \colon M \hookrightarrow N$ is a closed embedding, with $N$ also regular, then there is an isomorphism $i_* \colon \Khs{M}{X}{m} \to \Khs{N}{X}{m}$. Indeed, the group $\Khs{M}{X}{m}$ is canonically isomorphic to $\Kg{X}{m}$, but the datum of the closed embedding $X \hookrightarrow M$ can be used to define \emph{Adams operations with supports} \cite[\S4.3]{Sou-Op-85}
\[
\psi^k_M \colon \Khs{M}{X}{m} \to \Khs{M}{X}{m}.
\]

If $X$ is a given variety, then one can always choose a smooth variety $M$ containing $X$ as a closed subvariety. Then the endomorphism $\theta^k(-\Tan_M) \circ \psi^k_M$ of $\Zu \otimes\Khs{M}{X}{m}\simeq \Zu \otimes\Kg{X}{m}$ does not depend on the choice of $M$ \cite[Th\'eor\`eme 7]{Sou-Op-85}. One can thus define \emph{homological Adams operations} $\psi_k$ on $\Zu \otimes \Kg{X}{m}$; we summarize some of their main properties in the next proposition.

\begin{proposition}[{\cite[Th\'eor\`eme 7]{Sou-Op-85}}]
\label{prop:adams}
For every variety $X$, the $k$-th homological Adams operation 
\[
\psi_k \colon \Zu \otimes \Kg{X}{m}\to \Zu \otimes \Kg{X}{m}
\]
has the following properties.
\begin{enumerate}[i)]
\item \label{adams:proper} If $f$ is a proper morphism of varieties, then $f_* \circ \psi_k=\psi_k \circ f_*$.

\item \label{adams:open} If $u$ is an open embedding of varieties, then $u^* \circ \psi_k=\psi_k \circ u^*$.
    
\item \label{adams:compos} We have, as maps $\Z[1/(kk')] \otimes \Kg{X}{m} \to
\Z[1/(kk')] \otimes \Kg{X}{m}$,
\[
\psi_k \circ \psi_{k'}=\psi_{kk'}.
\]
\item \label{item:extprod} For all $x \in \Kg{X}{m}$ and $y \in \Kg{Y}{n}$, we have in $\Kg{X \times Y}{m+n}$
\[
\psi_k(x \times y)=\psi_k(x) \times \psi_k(y).
\]
\end{enumerate}
\end{proposition}

The next statement was proved in \cite[Th\'eor\`eme~7, vi)]{Sou-Op-85}, for $X$ smooth.
\begin{proposition}
\label{prop:regular}
Let $X$ be a regular variety with virtual tangent bundle $\Tan_X \in \Kzh(X)$. Under the identification $\Kg{X}{m} \simeq \Kh{X}{m}$, we have
\[
\psi_k=\theta^k(-\Tan_X)\circ \psi^k.
\]
\end{proposition}
\begin{proof}
Choose a closed embedding $i \colon X \hookrightarrow M$, with $M$ smooth. Then $i$ is a regular closed embedding, let $N$ be its normal bundle. We have $\Tan_X +[N]=i^*\Tan_M$ in $\Kzh(X)$. By the Riemann-Roch theorem \cite[Th\'eor\`eme 3]{Sou-Op-85}, we have in $\Zu \otimes \Khs{M}{X}{m}$
\[
\theta^k(-\Tan_M) \circ \psi^k_M \circ i_*=\theta^k(-\Tan_M) \circ i_* \circ \theta^k(N)\circ \psi^k_X=i_* \circ \theta^k(-\Tan_X) \circ \psi^k_X.
\]
Here $i_*$ is the natural isomorphism $\Khs{X}{X}{m} \to \Khs{M}{X}{m}$. Since $\psi^k_X$ corresponds to $\psi^k$ under the isomorphism $\Khs{X}{X}{m}\simeq\Kh{X}{m}$, we obtain the claim.
\end{proof}

\begin{definition}
\label{def:compatloc}
Let $\Omega$ be a fixed variety. An \emph{operation compatible with localization} is the data, for every integer $m$, of a natural transformation $l \colon \Zu \otimes \K'_m \to \Zu \otimes \K'_m$ of functors $\Var/\Omega \to \Ab$ (see \ref{sect:var} and \ref{sect:notations}), such that for every closed embedding $Y \hookrightarrow X$ over $\Omega$, the morphisms in the long localization sequence for $(Y,X)$ are compatible with the maps $l^X$, $l^Y$ and $l^{X-Y}$.
\end{definition}

Two examples of such operations are given in Lemmas~\ref{lemm:action} and \ref{lemm:partial}.
\begin{lemma}
\label{lemm:action}
The action of any element of $\Zu \otimes \Kzl(\Omega)$ is an operation compatible with localization.
\end{lemma}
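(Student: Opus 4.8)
The plan is to write the operation down explicitly and then reduce the two requirements of Definition~\ref{def:compatloc} to the projection formula and to the module structure carried by the localization sequence. Fix $a \in \Zu \otimes \Kzl(\Omega)$, and for $X \in \Var/\Omega$ denote by $\pi_X \colon X \to \Omega$ its structure morphism and by $a_X = \pi_X^* a \in \Zu \otimes \Kzl(X)$ the pull-back of $a$. I would define $l^X \colon \Zu \otimes \Kg{X}{m} \to \Zu \otimes \Kg{X}{m}$ as the action of $a_X$ coming from the $\Kzl(X)$-module structure on $\Kg{X}{m}$ (tensoring coherent sheaves by locally free ones), extended $\Zu$-linearly. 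The single observation that makes everything work is that pull-back on $\Kzl$ is functorial: whenever $f \colon X \to X'$ is a morphism over $\Omega$, one has $a_X = f^* a_{X'}$, so the various module structures that appear are all governed by one element.

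For naturality under proper morphisms, I would invoke the projection formula $f_*(f^* b \cdot x) = b \cdot f_*(x)$ for a proper $f \colon X \to X'$, any $b \in \Kzl(X')$ and $x \in \Kg{X}{m}$. Combined with $a_X = f^* a_{X'}$ this gives
\[
f_*(l^X x) = f_*(f^* a_{X'} \cdot x) = a_{X'} \cdot f_*(x) = l^{X'}(f_* x),
\]
so $l$ is a natural transformation of functors on $\Var/\Omega$. Now consider a closed embedding $i \colon Y \hookrightarrow X$ over $\Omega$ with open complement $j \colon U = X - Y \hookrightarrow X$. Compatibility of $l$ with the push-forward $i_*$ in the localization sequence is exactly the special case of the displayed computation with $f = i$. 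Compatibility with the open restriction $j^*$ is even more direct: $j^*$ is a ring homomorphism compatible with the module structures, and $a_U = j^* a_X$, so $j^*(a_X \cdot x) = a_U \cdot j^* x = l^U(j^* x)$.

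The remaining, and genuinely delicate, point is compatibility with the boundary map $\partial$. Here I would argue that the entire localization sequence is a sequence of $\Kzl(X)$-modules. Tensoring by a fixed locally free sheaf on $X$ is an exact endofunctor of $\Mc(X)$ which preserves the subcategory of coherent sheaves supported on $Y$ and commutes with restriction to $U$; it therefore induces an endomorphism of the whole long exact sequence coming from localization and dévissage (see \ref{sect:QSS}). In particular $\partial$ commutes with the action of every class in $\Kzl(X)$, that is, $\partial$ is $\Kzl(X)$-linear. Since the module structures used by $l^Y$ and $l^U$ are precisely those given by $a_Y = i^* a_X$ and $a_U = j^* a_X$, this linearity yields $\partial \circ l^U = l^Y \circ \partial$. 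Finally, because $\Zu$ is a flat (localization) $\Z$-algebra, applying $\Zu \otimes -$ preserves exactness and commutes with all these maps, so the compatibilities survive the passage to $\Zu$-coefficients. I expect the $\Kzl(X)$-linearity of $\partial$ to be the main obstacle, in the sense that it is the only step not reducible to a formal manipulation of pull-back and the projection formula; it rests on the exactness of the tensoring functors and the functoriality of Quillen's localization construction.
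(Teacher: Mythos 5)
Your proof is correct and follows essentially the same route as the paper: the decisive step in both is that tensoring with a locally free $\Oc_\Omega$-module is an exact functor preserving the subcategory of sheaves supported on $Y$, so by the functoriality of Quillen's localization construction it acts on the entire long exact sequence, including the boundary map; the reduction to such generators and the flatness of $\Zu$ handle the rest. The only cosmetic difference is that you verify naturality under proper push-forward separately via the projection formula, whereas the paper subsumes everything under the single functoriality statement.
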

\begin{proof}
Let $X \in \Var/\Omega$, and $Y$ a closed subvariety of $X$. If $\Ec$ is a locally-free coherent $\Oc_\Omega$-module, then the action of $[\Ec]$ on the $\K'$-groups is induced by the exact functor $-\otimes_{\Oc_\Omega} \Ec$.  If $\Fc$ is a coherent $\Oc_X$-module supported on $Y$, then so is the $\Oc_X$-module $\Fc \otimes_{\Oc_\Omega} \Ec$. Thus by the functoriality statement in \cite[\S 5, Theorem~5]{Qui-72}, the action of $[\Ec]$ commutes with the maps on the long localization sequence. Since $\Kzl(\Omega)$ is additively generated by such classes $[\Ec]$, we are done.
\end{proof}

\begin{lemma}
\label{lemm:partial}
The homological Adams operation $\psi_k$ is compatible with localization.
\end{lemma}
\begin{proof}
In view of Proposition~\ref{prop:adams}, \eqref{adams:proper} and \eqref{adams:open}, it only remains to check the compatibility of $\psi_k$ with connecting homomorphisms in the localization exact sequences. 
  
Let $X$ be a variety, $Y$ a closed subvariety. Choose a closed embedding $X \hookrightarrow M$, with $M$ a smooth variety. Then the connecting homomorphism $\Kg{X-Y}{m+1} \to \Kg{Y}{m}$ factors as $\Kg{X-Y}{m+1} \to \Kg{M-Y}{m+1}\to \Kg{Y}{m}$ \cite[\S7, Remark~3.4]{Qui-72}. Here the first map is the push-forward along the closed embedding $X-Y \hookrightarrow M-Y$, hence is compatible with $\psi_k$. The second map can be identified with the connecting homomorphism $\K_{m+1}(M-Y)\to \Khs{M}{Y}{m}$. It is compatible with (cohomological) Adams operations by \cite[Corollary~5.5]{Levine-Lambda}, with the action of $\theta^k(-\Tan_{M}) \in \Kzl(M)$ by Lemma~\ref{lemm:action}, and therefore with $\psi_k$, as requested.
\end{proof}

\begin{proposition}
\label{prop:compatpullback}
Let $f \colon Y \to X$ be a local complete intersection morphism with virtual tangent bundle $\Tan_f \in \Kzh(Y)$. As maps $\Zu \otimes \Kg{X}{m} \to \Zu \otimes \Kg{Y}{m}$,
\[
\theta^k(-\Tan_f) \circ f^* \circ \psi_k=\psi_k \circ f^*.
\]
\end{proposition}
\begin{proof}
We loosely follow the structure of the proof of \cite[Theorem~18.2]{Ful-In-98}.

\emph{(First step)} Assume that we have a cartesian square
\[ \xymatrix{
Y\ar[r]^f \ar[d] & X \ar[d] \\ 
N \ar[r]_g & M
}\]
with $N, M$ smooth varieties, vertical arrows closed embeddings, horizontal arrows smooth morphisms. The pull-back $g^* \colon  \Khs{M}{X}{m} \to \Khs{N}{Y}{m}$ is compatible with the Adams operations with supports (this follows from the naturality statement in \cite[Corollary~5.5]{Levine-Lambda}). We have in $\Kzh(N)$ the relation between tangent bundles $[\Tan_N]=[\Tan_g]+[g^*\Tan_M]$. Thus, as maps $\Zu \otimes \Khs{M}{X}{m}\to \Zu \otimes \Khs{N}{Y}{m}$,
\begin{align*}
      \theta^k(-\Tan_g) \circ g^* \circ  \theta^k(-\Tan_{M}) \circ \psi^k_{M}&=\theta^k(-\Tan_g) \circ \theta^k(-g^*\Tan_M) \circ g^*\circ \psi^k_M\\
      &=\theta^k(-\Tan_N) \circ \psi^k_{N} \circ g^*.
\end{align*}
The requested formula follows in this case by using the identifications $\Khs{N}{Y}{m} \simeq \Kg{Y}{m}$ and $\Khs{M}{X}{m} \simeq \Kg{X}{m}$, since $\Tan_g|_Y=\Tan_f$.

\emph{(Second step)} Assume that $f$ is a vector bundle. By \cite[Lemma~18.2]{Ful-In-98}, we can find $M$ and $N$ as in the first step, so that the formula holds in this case.

\emph{(Third step)} Now assume that $f$ is a regular closed embedding, with normal bundle $p\colon N_f \to Y$. We have $p^*\Tan_f=-p^*[N_f]=-\Tan_p \in \Kzl(Y)$. Applying Lemma~\ref{lemm:def} below, and using the second step for the morphism $p$, we compute
\begin{align*} 
 p^* \circ f^* \circ \psi_k &= \sigma^f \circ \psi_k=\psi_k \circ \sigma^f=\psi_k \circ p^* \circ f^*\\
  &=\theta^k(-\Tan_p)\circ p^*\circ \psi_k \circ f^*\\
  &=p^* \circ \theta^k(\Tan_f)\circ \psi_k \circ f^*.
\end{align*}
The first formula then follows since $p^*$ is injective \cite[Theorem~5.3]{Cai}.

\emph{(Fourth step)} Finally let $f$ be an arbitrary local complete intersection morphism. Choose a closed embeddings $X \hookrightarrow M$ and $Y \hookrightarrow N$ with $M,N$ smooth varieties. Then consider the commutative diagram, 
\[ \xymatrix{
Y \ar[r]^-h & X \times N\ar[r]^-p \ar[d] & X \ar[d] \\ 
&M\times N \ar[r] & M.
}
\]
The closed embedding $h$ is regular by \cite[Chapter~IV, Corollary~3.10]{FL-Ri-85}. We conclude by using the third step for $h$, and the first step for $p$.
\end{proof}

\begin{lemma}
\label{lemm:def}
Let $f \colon Y \hookrightarrow X$ be a closed embedding with normal cone $N_f$. Let $\sigma^f \colon \Kg{X}{m} \to \Kg{N_f}{m}$ be the deformation homomorphism (see \ref{sect:def}). Then we have, as maps $\Zu \otimes \Kg{X}{m} \to \Zu \otimes \Kg{N_f}{m}$,
\[
\sigma^f \circ \psi_k=\psi_k \circ \sigma^f.
\]
\end{lemma}
\begin{proof}
We use the notations of \ref{sect:def}. Let $x \in \Zu \otimes \Kg{X}{m}$. By Lemma~\ref{lemm:partial} and Proposition~\ref{prop:adams}, \eqref{item:extprod}, we have in $\Zu \otimes \Kg{N_f}{m}$
\[
\psi_k \circ \sigma^f(x)=\psi_k \circ \delta^f( \{t\} \times x)=\delta^f \circ \psi_k( \{t\} \times x)=\delta^f( \psi_k\{t\}\times \psi_k(x)).
\]
We conclude the proof with Lemma~\ref{lemm:tk} below.
\end{proof}

\begin{lemma}
\label{lemm:tk}
For the element $\{t\} \in \Kg{\Gm}{1}$ considered in \ref{sect:def}, we have 
\[
\psi_k\{t\}=\{t\}.
\]
\end{lemma}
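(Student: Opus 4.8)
The plan is to reduce the statement to a computation of the ordinary (cohomological) Adams operation on the class of a unit in $K_1$. Since $\Gm=\Spec F[t,t^{-1}]$ is smooth over the base field, it is regular, and its tangent bundle is trivial of rank one: the module of differentials is free of rank one (generated by $dt$), so $\Tan_{\Gm}=[\Oc_{\Gm}]$ in $\Kzl(\Gm)$. Proposition~\ref{prop:regular} then gives, under the identification $\Kg{\Gm}{1}\simeq\Kh{\Gm}{1}$,
\[
\psi_k\{t\}=\theta^k(-\Tan_{\Gm})\cdot\psi^k\{t\}=\theta^k(-[\Oc_{\Gm}])\cdot\psi^k\{t\}.
\]

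First I would compute the Bott factor. Since $\theta^k$ is a morphism from $(\Kzl,+)$ to $((\Zu\otimes\Kzl)^{\times},\cdot)$, and $\theta^k([\Oc_{\Gm}])=t^k(1)=k$ (consistent with the rank formula~\eqref{eq:thetakn}, as $[\Oc_{\Gm}]$ has rank one), we obtain $\theta^k(-[\Oc_{\Gm}])=k^{-1}$, acting on $\Zu\otimes\Kh{\Gm}{1}$ as multiplication by the scalar $k^{-1}$. Thus $\psi_k\{t\}=k^{-1}\cdot\psi^k\{t\}$, and it remains to prove that the cohomological Adams operation satisfies $\psi^k\{t\}=k\{t\}$; combining these yields $\psi_k\{t\}=k^{-1}\cdot k\{t\}=\{t\}$.

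For this last identity I would use that $\{t\}$ is the image in $\Kh{\Gm}{1}$ of the unit $t\in F[t,t^{-1}]^{\times}$, that is, a class of weight one. The cohomological Adams operation acts on the image of the units by multiplication by $k$; equivalently, $\{t\}$ lies in the first graded piece of the $\gamma$-filtration, on which $\psi^k$ acts by $k^{1}$. One can either cite the standard action of Adams operations on such weight-one (Milnor) symbols, or verify it directly from the $\lambda$-operations on $K_1$; a useful consistency check is the case $k=-1$, where the duality involution $\psi^{-1}$ sends $t\mapsto t^{-1}$ and hence $\{t\}\mapsto-\{t\}$, in agreement with $\psi^{-1}\{t\}=-\{t\}$.

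The main obstacle is precisely this final computation $\psi^k\{t\}=k\{t\}$: everything else is a formal consequence of the regularity of $\Gm$, the triviality of its tangent bundle, and the definition of $\theta^k$. The cleanest route is to identify $\{t\}$ with a weight-one symbol and invoke the standard behaviour of Adams operations on such classes, rather than attempting a hands-on manipulation of the $\lambda$-structure on $\Kh{\Gm}{1}$.
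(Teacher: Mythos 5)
Your proposal is correct and follows essentially the same route as the paper: apply Proposition~\ref{prop:regular}, note that $\Tan_{\Gm}$ is trivial of rank one so that $\theta^k(-\Tan_{\Gm})=k^{-1}$ by \eqref{eq:thetakn}, and conclude from $\psi^k\{t\}=\{t^k\}=k\{t\}$. The paper simply states the identity $\psi^k\{t\}=\{t^k\}$ (the standard action of Adams operations on classes of units) where you elaborate on its justification; there is no substantive difference.
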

\begin{proof}
By Proposition~\ref{prop:regular} and \eqref{eq:thetakn}, we have in $\Zu \otimes \Kg{\Gm}{1}$
\[
\psi_k\{t\}=\theta^k(-\Tan_{\Gm}) \circ \psi^k\{t\}=\theta^k(-1) \circ \psi^k\{t\}=k^{-1} \cdot \psi^k\{t\}=\ k^{-1} \cdot \{t^k\}=\{t\}.\qedhere
\]
\end{proof}

\section{Lifting operations from $K$ to $CK$}
We fix a variety $\Omega$. Let $X \in \Var/\Omega$ (see \ref{sect:var}), and $p$ an integer. Consider the category $\Cc_p$ with objects pairs $Z \subset W$ of closed subvarieties of $X$, such that $\dim W \leq p$ and $\dim Z \leq p-1$. A morphism $(Z',W') \to (Z,W)$ is a pair of closed embeddings $Z' \hookrightarrow Z$ and $W' \hookrightarrow W$ (over $X$). Thus $\Homm_{\Cc_p}( (Z',W') , (Z,W) )$ has at most one element, and the category $\Cc_p$ is filtered.

Given a morphism $i\colon (Z',W') \to (Z,W)$, we let $u \colon W-Z \to W-Z'$ be the open embedding, and $j \colon  W'-Z' \hookrightarrow W-Z'$ the closed embedding. We set
\[
i_*=u^* \circ j_* \colon \Kg{W'-Z'}{p+q} \to \Kg{W-Z}{p+q}.
\]
This makes $(Z,W) \mapsto \Kg{W-Z}{p+q}$ a functor $\Cc_p \to \Ab$. The associations $(Z,W) \mapsto \Kg{Z}{p+q}$ and $(Z,W) \mapsto \Kg{W}{p+q}$ also define such functors.

For every $(Z,W) \in \Cc_p$, we have a long localization sequence
\[
\cdots \to \Kg{W-Z}{p+q+1} \to \Kg{Z}{p+q} \to \Kg{W}{p+q} \to \Kg{W-Z}{p+q} \to \cdots
\]
Taking the filtering colimit over $\Cc_p$, and using the notation \eqref{eq:e1}, we obtain the exact sequence \cite[\S7, 5.]{Qui-72}
\begin{equation}
\label{seq:longexact}
\cdots \to E^1_{p,q+1}  \xrightarrow{\delta} \K_{p+q}(\Mc_{p-1}(X)) \xrightarrow{b} \K_{p+q}(\Mc_p(X)) \xrightarrow{f} E^1_{p,q} \to \cdots,
\end{equation}
Tensoring with $\Zu$, and taking the colimit over $\Cc_p$, we see that any operation $l$ compatible with localization (Definition~\ref{def:compatloc}) induces a commutative diagram 
\begin{equation}
\label{seq:longexact2}
\xymatrix{
E^1_{p,q+1}[\frac{1}{k}]\ar[r]^-{\delta} \ar[d]^{l^{(p/p-1,q+1)}} &  \K_{p+q}(\Mc_{p-1}(X))[\frac{1}{k}] \ar[d]^{l^{(p-1,q+1)}}\ar[r]^-b& \K_{p+q}(\Mc_p(X))[\frac{1}{k}]\ar[d]_{l^{(p,q)}} \ar[r]^-f&E^1_{p,q}[\frac{1}{k}] \ar[d]_{l^{(p/p-1,q)}}\\ 
E^1_{p,q+1}[\frac{1}{k}]\ar[r]_-{\delta} &  \K_{p+q}(\Mc_{p-1}(X))[\frac{1}{k}] \ar[r]_-b &\K_{p+q}(\Mc_p(X))[\frac{1}{k}]\ar[r]_-f&E^1_{p,q}[\frac{1}{k}]
}
\end{equation}

\begin{definition}
\label{def:gentrivial}
Let $n,q$ be integers. Assume that we have, for every integer $m\leq n$, a natural transformation $l \colon \Zu \otimes \K'_m \to \Zu \otimes \K'_m$ of functors $\Var/\Omega \to \Ab$ (see \ref{sect:var} and \ref{sect:notations}). We say that $l$ is \emph{generically trivial in weight $q$ and degree $\leq n$}, if for every locally integral variety $X$ over $\Omega$, and every generic point $x \colon \Spec(\kappa(x)) \to X$, we have, when $\dim_x X +q \leq n$ 
\[
x^* \circ l^X=0 \colon \Zu \otimes \Kg{X}{\dim_xX +q} \to \Zu \otimes\Kg{\Spec(\kappa(x))}{\dim_xX +q},
\]
where $\dim_x X$ is the dimension of the connected component containing $x$.
\end{definition}

Two examples of such operations can be found in Propositions~\ref{prop:adgen} and \ref{prop:action}.

\begin{lemma}
\label{lemm:vanish}
Let $l$ be an operation compatible with localization (Definition~\ref{def:compatloc}) and generically trivial in weight $q$ and degree $\leq n$ (Definition~\ref{def:gentrivial}). If $p \leq n-q$,
\[
l^{(p/p-1,q)}=0 \colon \Zu \otimes E^1_{p,q} \to \Zu \otimes E^1_{p,q}.
\]
\end{lemma}
\begin{proof}
Given $(Z,W) \in \Cc_{p}$, consider the morphism $r \colon (Z_{red},W_{red}) \to (Z,W)$ given the reduced structures. The maps $r_*$ give, by d\'evissage, an isomorphism between the localization sequences of the two pairs. Thus, while computing a colimit over $\Cc_p$, we can restrict our attention to pairs of reduced subvarieties. Moreover for such a pair $(Z,W)$, after enlarging $Z$, we can assume that $W-Z$ is locally integral. The restriction to the generic points map gives a flat pull-back
\[
g_{W-Z}^*\colon \Kg{W-Z}{p+q} \to \coprod_{x \in X_{(p)}\cap (W-Z)} \Kg{\Spec(\kappa(x))}{p+q}.
\]
whose colimit is the identity of $E^1_{p,q}$. Since $l$ is generically trivial in weight $q$ and degree $\leq n$, we have $g_{W-Z}^* \circ l^{W-Z}=0$ for $p+q \leq n$. The statement follows.
\end{proof}

We will use the notation 
\[
\Ck_{p,q}=\Zu \otimes \CK_{p,q} \colon \Var/\Omega \to \Ab.
\]
We have a commutative diagram of functors $\Var/\Omega \to \Ab$:
\[ 
\xymatrix{
\Zu \otimes \K_{p+q}(\Mc_p(-))\ar[rr]^j \ar[d]_b && \Ck_{p,q}(-) \ar[d]^{\beta}\ar[dll]^i \\ 
\Zu \otimes \K_{p+q}(\Mc_{p+1}(-)) \ar[rr]_j && \Ck_{p+1,q-1}(-)
}
\]
The map $i$ is a monomorphism, and $j$ is an epimorphism. When proving equality of type $f=g$ for maps $f,g$ between $\Ck$-groups, we will without further notice simply prove that $i \circ f \circ j=i \circ g \circ j$.

\begin{proposition}
\label{prop:varphi}
Let $l$ be an operation compatible with localization (Definition~\ref{def:compatloc}). There is a unique natural transformation $\lambda \colon \Ck_{p,q} \to \Ck_{p,q}$ such that
\[
\lambda \circ j=j \circ l^{(p,q)} \quad \text{ and } \quad i \circ \lambda=l^{(p+1,q-1)} \circ i.
\]
\end{proposition}
\begin{proof}
Unicity is immediate, and existence follows from the relation (see \eqref{seq:longexact2})
\[
l^{(p+1,q-1)} \circ i \circ j=i \circ j \circ l^{(p,q)}.\qedhere
\]
\end{proof}

\begin{proposition}
\label{prop:sigma}
In the situation of Proposition~\ref{prop:varphi}, assume additionally that $l$ is generically trivial in weight $q$ and degree $\leq n$ (Definition~\ref{def:gentrivial}). Let $p \leq n-1-q$. There is a unique natural transformation $\Lambda \colon \Ck_{p,q} \to \Ck_{p-1,q+1}$ such that
\[
i \circ \Lambda \circ j=l^{(p,q)} \quad \text{ and } \quad \lambda=\beta \circ \Lambda.
\]
\end{proposition}
\begin{proof}
We use the notations of \eqref{seq:longexact2}. Since $p+q \leq n$ (resp.\ $p+q \leq n-1$), we know by Lemma~\ref{lemm:vanish} that $l^{(p/p-1,q)} =0$ (resp.\ $l^{(p+1/p,q)} =0$). Hence
\[
f \circ l^{(p,q)}=l^{(p/p-1,q)} \circ f \quad (\text{resp.\ } l^{(p,q)} \circ \delta=\delta \circ l^{(p+1/p,q)})
\]
is zero, and 
\[
\im l^{(p,q)} \subset \ker f=\im b=\im i \quad (\text{resp.\ } \ker j=\ker b=\im \delta \subset \ker l^{(p,q)}).
\]
This proves existence of the operation $\Lambda$ satisfying the first condition (unicity is immediate). The second formula follows from the computation
\[
i \circ \lambda \circ j=i\circ j \circ l^{(p,q)}=i\circ j \circ i \circ \Lambda \circ j=i \circ \beta \circ \Lambda \circ j.\qedhere
\]
\end{proof}

We now investigate the behaviour of the operations $\lambda$ and $\Lambda$ with respect to the connecting homomorphisms in the localization sequences for $\CK$-groups.

Let  $Y \hookrightarrow X$ be a closed embedding. We view $\Mc_p(Y)$ as a subcategory of $\Mc_p(X)$, and the usual argument involving d\'evissage and localization yields a long exact sequence. Let $\Delta \colon \K_{p+q}(\Mc_p(X-Y)) \to \K_{p+q-1}(\Mc_p(Y))$ be the connecting homomorphism in this sequence. It can be identified with the colimit of the connecting homomorphisms $\Kg{Z-Z\cap Y}{p+q} \to \Kg{Z\cap Y}{p+q-1}$ in the localization sequences associated with the closed embeddings $Z\cap Y \hookrightarrow Z$, where $Z$ runs over the closed subvarieties of $X$, with $\dim Z \leq p$. Thus, if $l$ is an operation compatible with localization, we have, as maps $\K_{p+q}(\Mc_p(X-Y))[\frac{1}{k}] \to \K_{p+q-1}(\Mc_p(Y))[\frac{1}{k}]$
\begin{equation}
\label{eq:deltal}
\Delta \circ l^{(p,q)}=l^{(p,q-1)} \circ \Delta.
\end{equation}

By construction of the connecting homomorphism $\partial \colon \CK_{p,q}(X-Y) \to \CK_{p-1,q}(Y)$ \cite[Theorem~5.1]{Cai}, we have, as maps $\K_{p+q}(\Mc_p(X-Y)) \to \K_{p+q-1}(\Mc_p(Y))$,
\begin{equation}
\label{eq:deltapartial}
i \circ \partial \circ j=\Delta.
\end{equation}
From the equalities of maps $\K_{p+q}(\Mc_p(X-Y)) \to \K_{p+q-1}(\Mc_{p+1}(Y))$
\[
i \circ \partial \circ \beta \circ j=\Delta \circ b=b\circ \Delta=i \circ \beta \circ \partial \circ j,
\]
We deduce the equality of maps $\CK_{p,q}(X-Y) \to \CK_{p,q+1}(Y)$
\begin{equation}
\label{eq:deltabeta}
\partial \circ \beta=\beta \circ \partial.
\end{equation}

\begin{proposition}
\label{prop:varphipartial}
Let $Y \hookrightarrow X$ be a closed embedding, $\partial \colon \Ck_{p,q}(X-Y) \to \Ck_{p-1,q}(Y)$ the corresponding connecting homomorphism. In the situation of Proposition~\ref{prop:varphi}, we have, as maps $\Ck_{p,q}(X-Y) \to \Ck_{p-1,q}(Y)$,
\[
\partial \circ \lambda=\lambda \circ \partial. 
\]
\end{proposition}
\begin{proof}
By \eqref{eq:deltal} and \eqref{eq:deltapartial}, we have, as maps $\K_{p+q}(\Mc_p(X-Y))[\frac{1}{k}] \to \K_{p+q-1}(\Mc_{p-1}(Y))[\frac{1}{k}]$,
\[
i \circ \partial \circ \lambda \circ j=i \circ \partial \circ j \circ l^{(p,q)}=\Delta \circ l^{(p,q)}=l^{(p,q-1)} \circ \Delta=l^{(p,q-1)} \circ i \circ \partial \circ j=i \circ \lambda \circ \partial \circ j.\qedhere
\]
\end{proof}

\begin{proposition}
\label{prop:sigmapartial}
Let $Y \hookrightarrow X$ be a closed embedding, $\partial \colon \Ck_{p,q}(X-Y) \to \Ck_{p-1,q}(Y)$ the corresponding connecting homomorphism. In the situation of Proposition~\ref{prop:sigma}, we have, as maps $\Ck_{p,q}(X-Y) \to \Ck_{p-1,q}(Y)$ (with  $p\leq n-1-q$),
\[
\beta \circ \Lambda \circ \partial = \beta \circ \partial \circ \Lambda. 
\]
\end{proposition}
\begin{proof}
This follows from Propositions~\ref{prop:sigma} and \ref{prop:varphipartial}, and from \eqref{eq:deltabeta}.
\end{proof}

\section{Some operations in connective $K$-theory}
Combining Proposition~\ref{prop:varphi} and Lemma~\ref{lemm:partial}, we obtain, in the notations of \eqref{seq:longexact2}
\begin{proposition}
\label{prop:varphik}
There is a unique natural transformation of functors $\Var \to \Ab$
\[
\varphi_k \colon \Ck_{p,q} \to \Ck_{p,q}
\]
such that
\[
\varphi_k \circ j=j \circ \psi_k^{(p,q)} \quad \text{ and } \quad i \circ \varphi_k=\psi_k^{(p+1,q-1)} \circ i.
\]
\end{proposition}

These operations can be thought of as homological Adams operations for connective $K$-theory (see also Proposition~\ref{prop:lci}, \eqref{item:lci:phi}). Over a field of characteristic zero, after tensoring with $\mathbb{Q}$, similar operations were considered in \cite{MalagonLopez}.\\

Our next purpose is to modify $\varphi_k$ in order to obtain a generically trivial operation. Therefore we need to investigate the behaviour of homological Adams operations with respect to restriction to a generic point.

\begin{lemma}
\label{lemm:mult}
Let $X$ be a locally integral variety, $x \colon \Spec(\kappa(x)) \to X$ a generic point. Let $m \leq 2$. Then, as maps $\Zu \otimes \Kg{X}{m} \to \Zu \otimes \Kg{\Spec(\kappa(x))}{m}$,
\[
x^* \circ \psi_k=k^{m-\dim_x X} \cdot x^*.
\]
\end{lemma}
\begin{proof}
Let $u \colon U \to X$ be a non-empty open connected regular subvariety of $X$ containing $x$. Let $g \colon \Spec(\kappa(x)) \to U$ be the generic point of $U$. Then, using Proposition~\ref{prop:adams},~\eqref{adams:open}, Proposition~\ref{prop:regular}, and the relation \eqref{eq:thetakn},  
\begin{align*} 
x^* \circ \psi_k &= g^* \circ u^*\circ \psi_k \\
	&=g^* \circ  \psi_k \circ u^*\\
    &=g^* \circ \theta^k(-\Tan_U) \circ \psi^k \circ u^*\\
    &=\theta^k(-g^*\Tan_U) \circ \psi^k \circ g^* \circ u^*\\
    &=k^{-\dim_x X} \cdot \psi^k \circ x^*.
\end{align*}
The $(m+1)$-st term of the gamma filtration $\Figh{m+1}{\kappa(x)}{m}$ vanishes \cite[Th\'eor\`eme~1]{Sou-Op-85}. Since $m \leq 2$, we have $\Kg{\Spec(\kappa(x))}{m}=\Kh{\kappa(x)}{m}=\Figh{m}{\kappa(x)}{m}$ \cite[\S 2]{Sou-Op-85}. Thus $\psi^k$ acts on $\K_m(\kappa(x))$ by multiplication by $k^m$, when $m\leq2$. (Alternatively, this is true for $\Kh{\kappa(x)}{0}=\Z$ and $\Kh{\kappa(x)}{1}=\kappa(x)^{\times}$; since $\Kh{\kappa(x)}{2}$ is generated by products of two elements of $\Kh{\kappa(x)}{1}$, and $\psi^k$ is multiplicative, we see that this is also true for $m=2$.)
\end{proof}

\begin{proposition}
\label{prop:adgen}
The operation $\psi_k-k^q \id$ is compatible with localization (Definition~\ref{def:compatloc}) and generically trivial in weight $q$ and degree $\leq 2$ (Definition~\ref{def:gentrivial}).
\end{proposition}
\begin{proof}
The first statement follows from Lemma~\ref{lemm:partial}, since multiplication by $k^q$ is compatible with localization. The second statement follows from Lemma~\ref{lemm:mult}.
\end{proof}

Combining Propositions \ref{prop:sigma} and \ref{prop:adgen}, we obtain, in the notations of \eqref{seq:longexact2}
\begin{proposition}
\label{prop:sigmak}
There is a unique natural transformation of functors $\Var \to \Ab$
\[
\sik \colon \Ck_{p,q} \to \Ck_{p-1,q+1} \quad (p+q \leq 1),
\]
such that
\[
i \circ \sik \circ j=\psi_k^{(p,q)} - k^q \id.
\]
\end{proposition}

\begin{proposition}
\label{prop:extprod}
Let $X$ and $Y$ be varieties. Let $x \in \Ck_{p,q}(X)$ and $y \in \Ck_{r,s}(Y)$, with $p+q+r+s\leq 1$. Then, in $\Ck_{p+r-1,q+s+1}(X\times Y)$,
\[
\sik(x \times y)=k^s \cdot \sik(x) \times y + k^q \cdot x \times \sik(y) + \beta \big( \sik(x) \times \sik(y) \big).
\]
\end{proposition}
\begin{proof}
We compute, for $a,b$ such that $j(a)=x$, $j(b)=y$,
\begin{align*}
   &i \circ \sik \circ j(a \times b)=\psi_k^{(p,q)}(a) \times \psi_k^{(r,s)}(b) - k^{q+s} a \times b\\
   &=k^s(\psi_k^{(p,q)}(a)-k^qa) \times b + k^q a \times (\psi_k^{(r,s)}(b) -k^sb) + ( \psi_k^{(p,q)}(a)-k^qa) \times (\psi_k^{(r,s)}(b)-k^sb)\\
   &=k^s i\circ \sik \circ j(a)\times b + k^q a \times i\circ \sik \circ j(b) + i\circ \sik \circ j(a) \times i\circ \sik \circ j(b)\\
   &=i\circ \Big( k^s \sik \times \id + k^q \id \times \sik + \beta \circ \sik\times \sik\Big) \circ j(a \times b).
\end{align*}

The last equality follows from the formulas
\[
i(u) \times i(v)=i\circ \beta (u \times v) \quad \text{ and } \quad j(u) \times j(v)=j(u\times v).\qedhere
\]
\end{proof}

\begin{lemma}
\label{lemm:commutes}
Assume that $p+q \leq 1$. Then, as maps $\Ck_{p,q} \to \Ck_{p,q}$
\[
\beta \circ \sik - \sik \circ \beta=(1-k) k^{q-1} \cdot\id.
\]
\end{lemma}
\begin{proof}
Recall from \eqref{seq:longexact2} that we have 
\[
i \circ j \circ \psi_{k}^{(p,q)}=\psi_{k}^{(p+1,q-1)} \circ i \circ j.
\]
We compute
\begin{align*}
i \circ (\beta \circ \sik -\sik \circ \beta) \circ j&=i\circ j\circ i \circ \sik \circ j - i \circ \sik \circ j \circ i   \circ j\\
  &=i \circ j \circ (\psi_{k}^{(p,q)} - k^q \id) - (\psi_{k}^{(p+1,q-1)} - k^{q-1}\id) \circ i \circ j\\
  &=-k^q \cdot i \circ j + k^{q-1} \cdot i \circ j\\
  &=i\circ( (1-k)k^{q-1} \cdot \id ) \circ j. \qedhere
\end{align*}
\end{proof}

We now introduce a characteristic class $\pi^k$ for $\Ck$-groups. This will allow us to express the compatibility of the operation $\sik$ with pull-backs in the next section.
\begin{proposition}
\label{prop:action}
Let $\Omega$ be a variety, and $u \in \Zu \otimes \Kzh(\Omega)$ such that $\rank(u)=0$. Then the action of $u$ is compatible with localization (Definition~\ref{def:compatloc}) and generically trivial in weight $q$ and degree $\leq n$ (Definition~\ref{def:gentrivial}), for every $q,n$.
\end{proposition}
\begin{proof}
The first statement is Lemma~\ref{lemm:action}. Let $f \colon X \to \Omega$ be a morphism, with $X$ a locally integral variety. If $x \colon \Spec(\kappa(x)) \to X$ is a generic point, then $x^*\colon \Zu \otimes \Kg{X}{m} \to \Zu \otimes \Kh{\kappa(x)}{m}$ respects the actions of $u$. But $u$ acts on $\Zu \otimes \Kh{\kappa(x)}{m}$ by multiplication by the element $x^* \circ f^*(u) \in \Zu \otimes \Kh{\kappa(x)}{0}$. The latter has rank zero, and the rank map $\Kzl(\kappa(x))\to \Z$ is an isomorphism.
\end{proof}

\begin{lemma}
\label{lemm:characteristic}
For every element $y \in \Kzh(X)$, there is a unique morphism
\[
\pi^k(y) \colon \Ck_{p,q}(X) \to \Ck_{p-1,q+1}(X)
\]
satisfying
\[
i \circ \pi^k(y) \circ j=\theta^k(y) - k^{\rank(y)} \id.
\]
Moreover $\pi^k$ is a characteristic class, i.e. if $f \colon Y\to X$ is a flat morphism, then 
\[
\pi^k(f^*(y))\circ f^*=f^* \circ \pi^k(y).
\]
\end{lemma}
\begin{proof}
The second statement follows from the first, and from the fact that $\theta^k(y) - k^{\rank(y)} \id$ is a characteristic class. We obtain the existence of $\pi^k$ by using \eqref{eq:thetakn}, Propositions~\ref{prop:action} and \ref{prop:sigma} for $\Omega=X$.
\end{proof}

The class $\pi^k$ does not take sums to composition of morphisms, but we have
\begin{lemma}
\label{lemm:pitheta}
Let $x,y \in \Kzl(X)$. Then, as maps $\Ck_{p,q}(X) \to \Ck_{p-1,q+1}(X)$,
\[
\pi^k(x+y)=\theta^k(x) \circ \pi^k(y) + k^{\rank(y)}\cdot \pi^k(x).
\]
\end{lemma}
\begin{proof}
We compute, as maps $\Zu \otimes \K_{p+q}(\Mc_p(X)) \to \Zu \otimes \K_{p+q}(\Mc_p(X))$,
\begin{align*} 
    i \circ \pi^k(x+y) \circ j &= \theta^k(x+y) - k^{\rank(x+y)}\cdot\id\\
    &=\theta^k(x) \circ \theta^k(y)-k^{\rank(x)} k^{\rank(y)}\cdot\id\\
    &=\theta^k(x)\circ\big(\theta^k(y)-k^{\rank(y)}\cdot \id\big)+k^{\rank(y)}\cdot\big(\theta^k(x)-k^{\rank(x)}\cdot \id\big)\\
    &=\theta^k(x)\circ i \circ \pi^k(y)\circ j+k^{\rank(y)}\cdot i \circ \pi^k(x)\circ j\\
    &=i \circ \big(\theta^k(x) \circ \pi^k(y) + k^{\rank(y)}\cdot \pi^k(x)\big) \circ j.\qedhere
\end{align*}
\end{proof}

\section{Compatibility with Gysin maps}
The purpose of this section is to describe the behaviour of the operations
\[
\varphi_k \colon \Ck_{p,q} \to \Ck_{p,q} \quad \text{ and } \quad \sik \colon \Ck_{p,q} \to \Ck_{p-1,q+1}
\]
of Propositions~\ref{prop:varphik} and \ref{prop:sigmak} with respect to pull-backs along local complete intersection morphisms. While proving the main statement of this section (Proposition~\ref{prop:lci}) for the operation $\sik$ as above, we shall use the existence of the operation
\[
\sik \colon \Ck_{p+1,q} \to \Ck_{p,q+1},
\]
which is defined only for $q=-p$. Therefore we shall make this assumption then. Note that this is not really harmful since we are ultimately interested in the Chow group.\\

We now formalize an argument that will be used in various situations.
\begin{definition}
Let $\Omega$ be a variety. We say that a functor $\Fc\colon \Var/\Omega \to \Var/\Omega$ is of \emph{relative dimension $\leq d$} if for every $Z \in \Var/\Omega$, we have $\dim \Fc(Z)\leq \dim Z +d$.
\end{definition}

\begin{example}
\label{ex:flat}
(See \cite[B.2.5]{Ful-In-98}) Let $f\colon Y \to X$ be a flat morphism of relative dimension $d$. Then the functor $\Fc_f\colon \Var/X \to  \Var/X, Z \mapsto Z \times_X Y$ defined by taking the inverse image under $f$ has relative dimension $\leq d$.
\end{example}

\begin{example}
\label{ex:normalcone}
Let $f \colon Y \hookrightarrow X$ be a closed embedding. Let $Z \in \Var/X$. We define $\Nn_f(Z)$ as the normal cone of the induced closed embedding $Z\times_X Y \hookrightarrow Z$. If $Z' \to Z$ is a proper morphism of schemes over $X$, there is a natural proper morphism $\Nn_f(Z') \to \Nn_f(Z)$ over $X$. This gives a functor $\Nn_f \colon \Var/X \to \Var/X$. It follows from \cite[B.6.6]{Ful-In-98} that $\Nn_f$ has relative dimension $\leq 0$.
\end{example}

We refer to \ref{sect:by} for the definition of the map $\by \colon \Ck_{p,q} \to \K'_{p+q}$.
\begin{proposition}
\label{prop:zero}
Let $\Omega$ be a variety. Let $\mathcal{F} \colon \Var/\Omega \to \Var/\Omega$ be a functor of relative dimension $\leq d$. Let $\eta \colon \Ck_{p,q} \to \Ck_{p+d-1,s} \circ \mathcal{F}$ be a natural transformation of functors $\Var/\Omega \to \Ab$. Assume that $\by \circ \eta=0$. Then $\eta=0$.
\end{proposition}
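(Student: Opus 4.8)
The plan is to combine a dévissage --- writing an arbitrary class as a proper pushforward from a subvariety of dimension at most $p$ --- with the observation that, thanks to the shift by $-1$ in the first index, the map $\by$ is injective on the target group. I expect essentially all the content to lie in this injectivity, the rest being formal.

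First I would record the relevant injectivity property of $\by$. I claim that if $W$ is a variety and $r,s'$ are integers with $\dim W \leq r+1$, then $\by \colon \Ck_{r,s'}(W) \to \Zu \otimes \Kg{W}{r+s'}$ is injective. Since $r + 1 \geq \dim W$, the category $\Mc_{r+1}(W)$ equals $\Mc(W)$, so $\Ck_{r+1,s'-1}(W) = \Zu \otimes \Kg{W}{r+s'}$ and $\by$ is the composite $\Ck_{r,s'}(W) \xrightarrow{\beta} \Ck_{r+1,s'-1}(W) = \Zu \otimes \Kg{W}{r+s'}$; in particular $\ker \by = \ker \beta$. Tensoring the long exact sequence of \ref{par:connective} with the flat group $\Zu$, this kernel is the image of $\Zu \otimes E^2_{r+2,s'-1}$. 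But $W$ has no point of dimension $r + 2$, so $E^1_{r+2,s'-1}$ vanishes by \eqref{eq:e1}, hence so does $E^2_{r+2,s'-1}$, and $\ker \by = 0$.

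Next I would reduce the statement to classes on varieties of dimension at most $p$. Let $X \in \Var/\Omega$ and $x \in \Ck_{p,q}(X)$. Since $j$ is surjective and $\K_{p+q}(\Mc_p(X))$ is the filtered colimit of the groups $\Kg{Z}{p+q}$ over the closed subvarieties $Z \subseteq X$ with $\dim Z \leq p$, the class $x$ is the proper pushforward $g_*(z)$ of some $z \in \Ck_{p,q}(Z)$ along the inclusion $g \colon Z \hookrightarrow X$ of such a subvariety. As $g$ is a morphism in $\Var/\Omega$, naturality of $t$ gives $t^X(x) = \mathcal{F}(g)_*\big(t^Z(z)\big)$, so it suffices to prove $t^Z(z) = 0$.

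Finally I would feed this into the injectivity property. Because $\mathcal{F}$ has relative dimension $\leq d$, we have $\dim \mathcal{F}(Z) \leq \dim Z + d \leq p + d = (p+d-1)+1$, so the first paragraph applies with $W = \mathcal{F}(Z)$ and $r = p+d-1$, showing that $\by$ is injective on $\Ck_{p+d-1,s}(\mathcal{F}(Z))$. Since $\by \circ t = 0$ gives $\by\big(t^Z(z)\big) = 0$, we conclude $t^Z(z) = 0$, whence $t^X(x) = 0$; as $X$ and $x$ are arbitrary, $t = 0$. The one delicate point is the injectivity of $\by$ on the target, which hinges on matching the index $p+d-1$ against the bound $\dim \mathcal{F}(Z) \leq p+d$ so that the obstructing term $E^2_{r+2,\,\cdot}$ disappears for want of points of dimension $r+2$; beyond careful bookkeeping of the indices in the long exact sequence, I foresee no real difficulty.
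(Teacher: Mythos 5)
Your proof is correct and follows essentially the same route as the paper's: reduce by d\'evissage to a closed subvariety $Z \subseteq X$ with $\dim Z \leq p$, then use the injectivity of $\by$ on $\Ck_{p+d-1,s}(\Fc(Z))$, which holds because $\dim \Fc(Z) \leq p+d$. The only difference is that you spell out the proof of that injectivity (via the vanishing of $E^2_{r+2,s'-1}$ for want of points of dimension $r+2$), a step the paper simply asserts.
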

\begin{proof}
Let $X \in \Var/\Omega$. We prove that $\eta^X \colon \Ck_{p,q}(X) \to \Ck_{p+d-1,s}(\Fc(X))$ vanishes. Recall that we have \cite[\S7, (5.1)]{Qui-72}
\[
\colim{z} \Kg{Z}{p+q} \simeq \K_{p+q}(\Mc_p(X)),
\]
where $z$ runs over the closed embeddings $z \colon Z \hookrightarrow X$, with $\dim Z \leq p$. It follows that the group $\Ck_{p,q}(X)$ is generated by the subgroups $z_*\Ck_{p,q}(Z)$, with $z$ as above. Since $\Fc$ has relative dimension $\leq d$, the variety $\Fc(Z)$ has dimension $\leq p+d$, hence $\by \colon \Ck_{p+d-1,s}(\mathcal{F}(Z)) \to \Kg{\mathcal{F}(Z)}{p+d-1+s}$ is injective. It follows that $\eta^Z(\Ck_{p,q}(Z))=0$. We are done since $\eta^X \circ z_*=\Fc(z)_* \circ \eta^Z$.
\end{proof}

\begin{remark}
The proof actually shows that the category $\Var/\Omega$ can be replaced, in Proposition~\ref{prop:zero}, by its subcategory where morphisms are closed embeddings.
\end{remark}

\begin{lemma}
\label{lemm:t}
Let $Y$ be a variety, and $x \in \Ck_{p,q}(Y)$.
\begin{enumerate}[a)]
\item \label{item:t:phi} We have $\varphi_k(\{t\} \times x)=\{t\} \times \varphi_k(x)$ in $\Ck_{p+1,q}(\Gm \times Y)$.

\item \label{item:t:sigma} When $p+q=0$, we have $\sik(\{t\} \times x)=\{t\} \times \sik(x)$ in $\Ck_{p,q-1}(\Gm \times Y)$.
\end{enumerate}
\end{lemma}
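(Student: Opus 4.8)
The plan is to prove part~\eqref{item:t:phi} by a direct computation at the level of the groups $\K_{p+q}(\Mc_p(-))$, which present the connective groups through the epimorphism $j$, and then to deduce part~\eqref{item:t:sigma} from Proposition~\ref{prop:zero} once part~\eqref{item:t:phi} is available.

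For part~\eqref{item:t:phi}, I would exploit that the external product is defined already on the groups $\K_{p+q}(\Mc_p(-))$ and is compatible with $j$ (the relation $j(u)\times j(v)=j(u\times v)$). Writing $\{t\}=j(\tilde t)$, where $\tilde t\in\Zu\otimes\K_1(\Mc_1(\Gm))$ corresponds to $\{t\}$ under the identification $\K_1(\Mc_1(\Gm))=\Kg{\Gm}{1}$ (valid since $\Gm$ is a curve), and $x=j(a)$, one has $\{t\}\times x=j(\tilde t\times a)$. Using Proposition~\ref{prop:varphik} in the degrees $(p+1,q)$ and $(p,q)$, the computation
\[
\varphi_k(\{t\}\times x)=j\big(\psi_k^{(p+1,q)}(\tilde t\times a)\big)=j\big(\tilde t\times\psi_k^{(p,q)}(a)\big)=\{t\}\times\varphi_k(x)
\]
reduces the claim to the external product formula $\psi_k^{(p+1,q)}(\tilde t\times a)=\psi_k^{(1,0)}(\tilde t)\times\psi_k^{(p,q)}(a)$ together with $\psi_k^{(1,0)}(\tilde t)=\tilde t$. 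The latter holds because $\Mc_1(\Gm)=\Mc(\Gm)$, so that $\psi_k^{(1,0)}$ is the $\K'$-operation $\psi_k$, which fixes $\{t\}$ by Lemma~\ref{lemm:tk}. This argument is valid for arbitrary $p,q$.

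For part~\eqref{item:t:sigma}, I would consider the functor $\Fc(Y)=\Gm\times Y$ on $\Var/\point$, which has relative dimension $\leq 1$ by Example~\ref{ex:flat}, and the natural transformation $x\mapsto\sik(\{t\}\times x)-\{t\}\times\sik(x)$ from $\Ck_{p,q}$ to $\Ck_{p,q+1}\circ\Fc$. The hypothesis $p+q=0$ guarantees that both occurrences of $\sik$ are defined (their sources lie in total degrees $1$ and $0$) and makes the target first index equal to $p+d-1$ with $d=1$, so that Proposition~\ref{prop:zero} applies directly; it therefore suffices to show that $\by$ annihilates this transformation. The key ingredient is the relation $\beta\circ\sik=\varphi_k-k^{q}\id$, which follows from $i\circ\sik\circ j=\psi_k^{(p,q)}-k^q\id$ (Proposition~\ref{prop:sigmak}), from $\beta=j\circ i$, and from Proposition~\ref{prop:varphik}. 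Combined with $\by\circ\beta=\by$, it gives $\by\circ\sik=\by\circ\varphi_k-k^q\by$; applying part~\eqref{item:t:phi}, the compatibility of $\by$ with external products (Proposition~\ref{prop:properflataction}) and $\by\{t\}=\{t\}$ (Lemma~\ref{lemm:tby}), both $\by(\sik(\{t\}\times x))$ and $\by(\{t\}\times\sik(x))$ collapse to $\{t\}\times\big(\by\varphi_k(x)-k^q\by x\big)$, so they agree.

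The step carrying genuine content is the external product formula for $\psi_k$ at the level of the groups $\K_{p+q}(\Mc_p(-))$ used in part~\eqref{item:t:phi}: Proposition~\ref{prop:adams}~\eqref{item:extprod} records it only for $\K'$-theory. I expect to obtain it by observing that both the external product and the operation $\psi_k^{(p,q)}$ are computed as colimits, over closed subvarieties $Z$ with $\dim Z\leq p$, of the corresponding $\K'$-theoretic data: since $\Mc_p(Z)=\Mc(Z)$ whenever $\dim Z\leq p$, the operation $\psi_k^{(p,q)}$ restricts on each such $Z$ to the $\K'$-operation $\psi_k$ and the external product restricts to the $\K'$-external product, so that \eqref{item:extprod} propagates to the colimit. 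The remaining points—that the transformation in part~\eqref{item:t:sigma} is natural for proper morphisms and that its degree bookkeeping matches Proposition~\ref{prop:zero} on the nose—are routine.
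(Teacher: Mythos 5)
Your proposal is correct. Part~\eqref{item:t:phi} is essentially the paper's argument seen from the other side: the paper computes through the monomorphism $i$ (using $i\circ\varphi_k=\psi_k^{(p+1,q-1)}\circ i$ and the injectivity of $i$), while you compute through the epimorphism $j$; both reduce to $\psi_k^{(1,0)}\{t\}=\{t\}$ via Lemma~\ref{lemm:tk} and to the compatibility of the level-wise operations $\psi_k^{(p,q)}$ with external products, a point the paper uses without comment and which you rightly isolate and justify through the colimit presentation $\colim_Z\Kg{Z}{p+q}\simeq\K_{p+q}(\Mc_p(X))$. Part~\eqref{item:t:sigma} is where you genuinely diverge: the paper first shows $\sik\{t\}=0$ (from $i\circ\sik\circ j=\psi_k^{(1,0)}-k^0\id$ and Lemma~\ref{lemm:tk}) and then concludes in one line by applying the already-established Cartan-type formula of Proposition~\ref{prop:extprod} with $X=\Gm$, whereas you bypass Proposition~\ref{prop:extprod} entirely and instead use the rigidity mechanism of Proposition~\ref{prop:zero}, killing the defect transformation after applying $\by$ by means of the identity $\beta\circ\sik=\varphi_k-k^q\id$ (which is precisely the second formula of Proposition~\ref{prop:sigma} for $l=\psi_k-k^q\id$), part~\eqref{item:t:phi}, Proposition~\ref{prop:properflataction} and Lemma~\ref{lemm:tby}. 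Your degree bookkeeping for Proposition~\ref{prop:zero} checks out ($d=1$, target $\Ck_{p,q+1}\circ\Fc$), and the naturality of $x\mapsto\{t\}\times x$ under proper push-forward is indeed routine. The paper's route to~\eqref{item:t:sigma} is shorter given that Proposition~\ref{prop:extprod} is already available; yours buys independence from that proposition and matches the $\by$-rigidity pattern the paper itself deploys in Lemma~\ref{lemm:deform} and Proposition~\ref{prop:lci}, at the cost of making~\eqref{item:t:sigma} depend on~\eqref{item:t:phi}, a dependence the paper's proof does not have.
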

\begin{proof}
Under the identification $\K_1(\Mc_1(\Gm))=\Kg{\Gm}{1}$, the operation $\psi_k^{(1,0)}$ corresponds to $\psi_k$, so that Lemma~\ref{lemm:tk} gives
\[
\psi_k^{(1,0)}\{t\}=\{t\}.
\]
For any $y \in \Ck_{p,q}(Y)$ we have $i(\{t\} \times y)=\{t\} \times i(y)$, and
\[
i \circ \varphi_k(\{t\} \times x)=\psi_k^{(p+2,q-1)} (\{t\} \times i(x))=\psi_k^{(1,0)}\{t\} \times \psi_k^{(p+1,q-1)} \circ i(x)=i (\{t\} \times \varphi_k (x)).
\]
This proves \eqref{item:t:phi}. Assume that $p+q=0$. We have
\[
i \circ \sik\{t\}=i\circ \sik \circ j\{t\}=(\psi_k^{(1,0)} - k^0\id)\{t\}=0.
\]
We conclude the proof of \eqref{item:t:sigma} using Proposition~\ref{prop:extprod} applied with $X=\Gm$.
\end{proof}

\begin{lemma}
\label{lemm:deform}
Let $f \colon Y \hookrightarrow X$ be a closed embedding with normal cone $N_f$. 
\begin{enumerate}[a)]
\item \label{item:deform:phi} We have the equality of maps $\Ck_{p,q}(X) \to \Ck_{p,q}(N_f)$,
\[
\sigma^f \circ \varphi_k=\varphi_k \circ \sigma^f.
\]

\item \label{item:deform:sigma} We have the equality of maps $\Ck_{p,-p}(X) \to \Ck_{p-1,1-p}(N_f)$,
\[
\sigma^f \circ \sik=\sik \circ \sigma^f.
\]
\end{enumerate}
\end{lemma}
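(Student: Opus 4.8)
The plan is to treat the two parts separately, using in both cases that the deformation homomorphism $\sigma^f$ is assembled from the connecting homomorphism $\partial^f$ of the localization sequence for the closed embedding $N_f \hookrightarrow D_f$ (with open complement $\Gm \times X$) together with external multiplication by $\{t\}$; recall that $\sigma^f(x) = \partial^f(\{t\} \times x)$.

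For part \eqref{item:deform:phi} I would argue directly. Regarding $\partial^f$ as a localization connecting homomorphism, Proposition~\ref{prop:varphipartial} yields $\varphi_k \circ \partial^f = \partial^f \circ \varphi_k$. Together with Lemma~\ref{lemm:t}\eqref{item:t:phi}, this gives, for $x \in \Ck_{p,q}(X)$,
\[
\varphi_k \circ \sigma^f(x) = \varphi_k \circ \partial^f(\{t\} \times x) = \partial^f(\{t\} \times \varphi_k(x)) = \sigma^f \circ \varphi_k(x),
\]
so this case requires nothing further.

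The second part is harder, and is where the main difficulty lies. The naive analogue of the computation above would need $\sik \circ \partial^f = \partial^f \circ \sik$, but for $\sik$ only the weaker commutation \emph{up to} $\beta$ is available, namely $\beta \circ \sik \circ \partial^f = \beta \circ \partial^f \circ \sik$ from Proposition~\ref{prop:sigmapartial} (applicable since the relevant weight satisfies $(p+1)+(-p) = 1$). Feeding this into the same chain, and using Lemma~\ref{lemm:t}\eqref{item:t:sigma} (valid because $p + (-p) = 0$), I obtain only $\beta \circ \sik \circ \sigma^f = \beta \circ \sigma^f \circ \sik$; that is, the natural transformation $t = \sik \circ \sigma^f - \sigma^f \circ \sik$ satisfies $\beta \circ t = 0$ rather than $t = 0$.

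To upgrade this to $t = 0$ I would invoke Proposition~\ref{prop:zero}. The deformation homomorphism $\sigma^f$ is the value at $X$ of a natural transformation on $\Var/X$ with values in $\Ck \circ \Nn_f$, where $\Nn_f$ is the normal-cone functor of Example~\ref{ex:normalcone}, of relative dimension $\leq 0$; hence $t$ is a natural transformation $\Ck_{p,-p} \to \Ck_{p-1,1-p} \circ \Nn_f$ to which Proposition~\ref{prop:zero} applies with $d = 0$. Since $\by^{(p-1,1-p)} = \by^{(p,-p)} \circ \beta$, the relation $\beta \circ t = 0$ immediately forces $\by \circ t = \by^{(p,-p)} \circ (\beta \circ t) = 0$, and Proposition~\ref{prop:zero} then gives $t = 0$. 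The crux of the whole argument is thus the observation that the single application of $\beta$ through which $\by$ factors exactly absorbs the $\beta$-ambiguity of Proposition~\ref{prop:sigmapartial}, after which the relative-dimension hypothesis on $\Nn_f$ closes the gap.
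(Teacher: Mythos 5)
Your proposal is correct and follows essentially the same route as the paper: part \eqref{item:deform:phi} by direct computation from Proposition~\ref{prop:varphipartial} and Lemma~\ref{lemm:t}\eqref{item:t:phi}, and part \eqref{item:deform:sigma} by forming the natural transformation $t=\sik\circ\sigma^g-\sigma^g\circ\sik$ over $\Var/X$ with values in $\Ck_{p-1,1-p}\circ\Nn_f$, proving $\beta\circ t=0$ via Proposition~\ref{prop:sigmapartial} and Lemma~\ref{lemm:t}\eqref{item:t:sigma}, and concluding with Proposition~\ref{prop:zero}. Your explicit remark that $\by$ factors through $\beta$, so that $\beta\circ t=0$ suffices, is left implicit in the paper but is exactly the intended point.
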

\begin{proof}
We use the notations of \ref{sect:def}. We have for $x \in \Ck_{p,q}(X)$, by Lemma~\ref{lemm:t}, \eqref{item:t:phi}, and Proposition~\ref{prop:varphipartial},
\[
\sigma^f \circ \varphi_k(x)=\partial^f(\{t\}\times \varphi_k(x))=\partial^f\circ  \varphi_k(\{t\} \times x)=\varphi_k \circ\partial^f(\{t\} \times x)=\varphi_k \circ \sigma^f(x),
\]
proving \eqref{item:deform:phi}.

In order to obtain \eqref{item:deform:sigma}, we use the functor $\Nn_f$ of Example~\ref{ex:normalcone}. For $Z \in \Var/X$, we denote by $g \colon Z\times_X Y \hookrightarrow Z$ the induced closed embedding, whose normal cone is $\Nn_f(Z)$. Consider the natural transformation $\eta \colon \Ck_{p,-p} \to \Ck_{p-1,1-p} \circ \Nn_f$ of functors $\Var/X \to \Ab$ such that, for $Z$ as above,
\[
\eta^Z=\sik \circ \sigma^g-\sigma^g\circ \sik.
\]
We have for $x \in \Ck_{p,-p}(Z)$, using Proposition~\ref{prop:sigmapartial} and Lemma~\ref{lemm:t}, \eqref{item:t:sigma}
\begin{align*} 
\beta \circ \sik \circ \sigma^g(x)&=\beta \circ \sik \circ \partial^g (\{t\} \times x)\\
&=\beta \circ \partial^g \circ \sik (\{t\} \times x)\\
&=\beta \circ \partial^g(\{t\} \times \sik(x))\\
&=\beta \circ\sigma^g \circ \sik(x).
\end{align*}
Thus $\beta \circ \eta^Z=0$, and \eqref{item:deform:sigma} follows from Proposition~\ref{prop:zero}, applied with $\Omega=X$.
\end{proof}

We are now in position to prove the main result of this section.
\begin{proposition}
\label{prop:lci}
Let $f \colon Y \to X$ be a local complete intersection morphism of constant relative dimension $d$, with virtual tangent bundle $\Tan_f$.
\begin{enumerate}[a)]
\item \label{item:lci:phi} We have the equality of maps $\Ck_{p,q}(X) \to \Ck_{p+d,q-d}(Y)$,  
\[
\theta^k(-\Tan_f) \circ f^* \circ \varphi_k=\varphi_k \circ f^*.
\]

\item \label{item:lci:sigma} We have the equality of maps $\Ck_{p,-p}(X) \to \Ck_{p+d-1,1-d-p}(Y)$,
\[
f^* \circ \sik=\theta^k(\Tan_f) \circ \sik \circ f^* + k^{-p-d} \cdot \pi^k(\Tan_f)\circ f^*
\]
\end{enumerate}
\end{proposition}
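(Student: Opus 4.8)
The plan is to follow the four-step reduction behind the $\K'$-analogue Proposition~\ref{prop:compatpullback}: every local complete intersection morphism factors as a regular closed embedding followed by a smooth morphism, so it suffices to treat the flat case and the regular embedding case and then compose. The organizing observation is that after applying $\by$ both identities collapse onto Proposition~\ref{prop:compatpullback}. Concretely, from Propositions~\ref{prop:varphik} and \ref{prop:sigmak} and the defining relation of $\pi^k$, together with Proposition~\ref{prop:properflataction}, one checks the intertwining relations $\by\circ\varphi_k=\psi_k\circ\by$, $\by\circ\sik=(\psi_k-k^q\id)\circ\by$ in weight $q$, and $\by\circ\pi^k(y)=(\theta^k(y)-k^{\rank y}\id)\circ\by$. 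Thus the genuine task is to lift an identity already known on $\Kg{-}{\ast}$ back to $\Ck$.

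For $f$ flat of relative dimension $d$, Example~\ref{ex:flat} supplies the functor $\Fc_f\colon\Var/X\to\Var/X$, $Z\mapsto Z\times_X Y$, of relative dimension $\leq d$. Let $t$ be the difference of the two sides of part \eqref{item:lci:phi} (resp.\ \eqref{item:lci:sigma}), regarded as a natural transformation landing in $\Ck_\bullet\circ\Fc_f$. Using the three intertwining relations and the compatibility of $\by$ with flat pull-backs and with the $\Kzl$-action, $\by\circ t$ reduces after a short cancellation to $f^*\circ\psi_k-\theta^k(\Tan_f)\circ\psi_k\circ f^*=0$, which is exactly Proposition~\ref{prop:compatpullback}. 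Hence $\by\circ t=0$, and Proposition~\ref{prop:zero} yields $t=0$. For \eqref{item:lci:sigma} the degree drop built into $\sik$ puts the target in $\Ck_{p+d-1,\bullet}$, matching Proposition~\ref{prop:zero} on the nose; for \eqref{item:lci:phi}, where $\varphi_k$ preserves degree, I would invoke Proposition~\ref{prop:zero} with the slack bound $d+1$ for $\Fc_f$, which still forces injectivity of $\by$ on the relevant groups.

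Let now $f$ be a regular closed embedding with normal bundle projection $p\colon N_f\to Y$, so that $f^*=(p^*)^{-1}\circ\sigma^f$ and $p^*\Tan_f=-\Tan_p$. Lemma~\ref{lemm:deform} slides $\sigma^f$ past $\varphi_k$ and $\sik$, and applying the isomorphism $p^*$ to both sides reduces each identity to the flat case already proved for $p$. In part \eqref{item:lci:phi} the $\theta^k$-twist coming from the flat case matches the one produced by $p^*\Tan_f=-\Tan_p$, using $p^*\circ\theta^k(z)=\theta^k(p^*z)\circ p^*$ and that $\theta^k$ is a homomorphism. In part \eqref{item:lci:sigma} the same computation leaves the extra combination $\theta^k(-\Tan_p)\circ\pi^k(\Tan_p)+k^{-d}\pi^k(-\Tan_p)$, which vanishes by Lemma~\ref{lemm:pitheta} applied to $\Tan_p+(-\Tan_p)=0$ (note $\pi^k(0)=0$). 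This step uses only the flat-pull-back naturality of $\pi^k$, which is available.

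Finally I would compose, writing $f=\pi\circ h$ with $h$ a regular closed embedding into $X\times N$, $\pi$ the smooth projection, and $\Tan_f=\Tan_h+h^*\Tan_\pi$. Part \eqref{item:lci:phi} composes cleanly, the $\theta^k$-factors multiplying through the homomorphism property and $h^*\circ\theta^k(z)=\theta^k(h^*z)\circ h^*$. Part \eqref{item:lci:sigma} is where the difficulty concentrates: expanding the two cases and reassembling $\pi^k(\Tan_f)$ via Lemma~\ref{lemm:pitheta} leaves exactly the discrepancy $h^*\circ\pi^k(y)=\pi^k(h^*y)\circ h^*$, i.e.\ the naturality of the correction class $\pi^k$ under the Gysin map of the regular embedding $h$. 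This is the main obstacle, since $\pi^k$ has so far only been shown natural for flat pull-backs. I expect to resolve it by the deformation mechanism used for $\sik$: as $\pi^k$ is the operation produced by Proposition~\ref{prop:sigma} from the localization-compatible, generically trivial action of the rank-zero class $\theta^k(y)-k^{\rank y}$, the argument of Lemma~\ref{lemm:deform} shows $\sigma^h$ commutes with $\pi^k$, after which $(p^*)^{-1}$ is absorbed by the flat naturality of $\pi^k$ exactly as in the regular embedding step.
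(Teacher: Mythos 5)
Your proposal follows essentially the same route as the paper: the smooth case by checking the identity after applying $\by$ (reducing to Proposition~\ref{prop:compatpullback}) and invoking Proposition~\ref{prop:zero}, the regular closed embedding case by deformation to the normal cone together with Lemma~\ref{lemm:pitheta} applied to $\Tan_f+(-\Tan_f)=0$, and the general case by composition with Lemma~\ref{lemm:pitheta} applied to $\Tan_i+i^*\Tan_j$. The one point you flag as an obstacle --- naturality of $\pi^k$ under the Gysin map of the regular embedding in the composition step, which Lemma on $\pi^k$ only states for flat morphisms --- is in fact used without comment in the paper's own computation, and your proposed fix (check the identity after applying $\by$, then use the deformation mechanism of Lemma~\ref{lemm:deform} and Proposition~\ref{prop:zero}) is the natural way to justify it.
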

\begin{proof}[Proof when $f$ is smooth.]
Let $\Fc_f$ be the functor of Example~\ref{ex:flat}. For $Z \in \Var/X$, we denote by $f_Z \colon \Fc_f(Z)=Z\times_X Y \to Z$ the induced map. We define a natural transformation $\eta \colon \Ck_{p,q} \to \Ck_{p+d,q-d} \circ \Fc_f$ of functors $\Var/X \to \Ab$ by setting
\[
\eta^Z=\theta^k(-\Tan_f) \cdot {f_Z}^* \circ \varphi_k-\varphi_k \circ {f_Z}^*.
\]
We have $\by \circ \varphi_k=\psi_k\circ \by$, and it follows from Propositions~\ref{prop:properflataction} and \ref{prop:compatpullback} that $\by \circ \eta=0$. Since the functor $\Fc_f$ has relative dimension $\leq d+1$, we obtain \eqref{item:lci:phi} with Proposition~\ref{prop:zero}.

Using again Propositions~\ref{prop:properflataction} and \ref{prop:compatpullback}, we have, as maps $\Ck_{p,-p}(X) \to \Kg{Y}{0}$,
\begin{align*} 
 \by \circ f^* \circ \sik &= f^* \circ (\psi_k - k^{-p}\id) \circ \by\\
  &=\theta^k(\Tan_f) \circ \psi_k \circ f^*\circ\by -k^{-p} \cdot f^*\circ \by\\
  &=\theta^k(\Tan_f) \circ (\psi_k -k^{-p-d}\id)\circ f^*\circ\by + k^{-p-d}\cdot \big(\theta^k(\Tan_f)-k^d \id\big)\circ f^*\circ\by\\
  &=\by \circ \big(\theta^k(\Tan_f)\circ \sik + k^{-p-d}\cdot\pi^k(\Tan_f)\big) \circ f^*,
\end{align*}
and \eqref{item:lci:sigma} follows from Proposition~\ref{prop:zero}, since $\Fc_f$ has relative dimension $\leq d$.
\end{proof}
\begin{proof}[Proof when $f$ is a regular closed embedding.] Let $g \colon N_f \to Y$ be its normal bundle. The morphism $g$ is smooth of constant relative dimension $-d$, with virtual tangent bundle $\Tan_g=g^*[N_f]=-g^*\Tan_f$. We have, using Lemma~\ref{lemm:deform}, \eqref{item:deform:phi}, and the already established formula for the smooth morphism $g$,
  \begin{align*} 
   g^* \circ f^*\circ \varphi_k &= \sigma^f \circ \varphi_k=\varphi_k \circ \sigma^f=\varphi_k \circ g^*\circ f^*\\ 
    &=\theta^k(-\Tan_g) \circ g^* \circ \varphi_k \circ f^*\\
    &=g^* \circ \theta^k(\Tan_f)\circ \varphi_k \circ f^*.
     \end{align*}
The first formula then follows since $g^*\colon \Ck_{p+d,q-d}(Y) \to \Ck_{p,q}(N_f)$ is injective \cite[Theorem~5.3]{Cai}.

We have, by Lemma~\ref{lemm:deform}, \eqref{item:deform:sigma}, the already established formula for the smooth morphism $g$, and Lemma~\ref{lemm:pitheta} applied with $x=\Tan_f$ and $y=-\Tan_f$ (noting that $\pi^k(0)=0$), as maps $\Ck_{p,-p}(X) \to \Ck_{p-1,1-p}(N_f)$,
\begin{align*} 
g^* \circ f^* \circ \sik&=\sigma^f \circ \sik=\sik \circ \sigma^f=\sik \circ g^*\circ f^*\\
 &=\theta^k(-\Tan_g) \circ g^* \circ \sik \circ f^* - k^{-p-d-(-d)} \cdot \theta^k(-\Tan_g) \circ \pi^k(\Tan_g) \circ g^* \circ f^*\\
 &=g^* \circ \theta^k(\Tan_f) \circ \sik \circ f^* - k^{-p} \cdot g^*\circ \theta^k(\Tan_f) \circ \pi^k(-\Tan_f)\circ f^*\\
 &=g^* \circ \big(\theta^k(\Tan_f) \circ \sik +  k^{-p-d} \cdot \pi^k(\Tan_f)\big) \circ f^*.
\end{align*}
We conclude as above.
\end{proof}
\begin{proof}[Proof of Proposition~\ref{prop:lci}.] Decompose $f$ as $j \circ i$, with $i$ a regular closed embedding of constant relative dimension $d_i$, and $j$ a smooth morphism of constant relative dimension $d_j$. Then $\Tan_f=\Tan_i + i^*\Tan_j$, and $d=d_i+d_j$. The formula \eqref{item:lci:phi} follows from the previously considered cases:
\[
\varphi_k \circ f^*=\theta^k(-\Tan_i)\circ i^*\circ \varphi_k \circ j^*=\theta^k(-\Tan_i)\circ i^*\circ \theta^k(-\Tan_j) \circ j^* \circ \varphi_k=\theta^k(-\Tan_f) \circ f^* \circ \varphi_k.
\]

To obtain \eqref{item:lci:sigma}, we also use both cases considered above, and Lemma~\ref{lemm:pitheta} applied with $x=i^*\Tan_j$ and $y=\Tan_i$, to compute, as maps $\Ck_{p,-p}(X) \to \Ck_{p+d-1,1-p-d}(Y)$,
\begin{align*} 
   f^* \circ \sik &= i^* \circ j^* \circ \sik\\ 
  &=i^* \circ \big( \theta^k(\Tan_j) \circ \sik  + k^{-p-d_j} \cdot \pi^k(\Tan_j)\big)\circ j^*\\
  &=\Big(\theta^k(i^*\Tan_j) \circ \big( \theta^k(\Tan_i) \circ \sik + k^{-p-d_i-d_j}\cdot \pi^k(\Tan_i) \big) + k^{-p-d_j} \cdot \pi^k(i^*\Tan_j)\Big) \circ i^* \circ j^* \\
  &=\theta^k(\Tan_f) \circ \sik \circ f^* + k^{-p-d}\cdot \big( \theta^k(i^*\Tan_j)\circ \pi^k(\Tan_i) + k^{d_i}\cdot \pi^k(i^*\Tan_j)\big) \circ f^*  \\
  &=\theta^k(\Tan_f) \circ \sik \circ f^* + k^{-p-d}\cdot \pi^k(\Tan_f) \circ f^*. \qedhere
\end{align*}
\end{proof}

\section{The first Steenrod square}
In this section, we specialize to the case $k=-1$.\\

Let $f \colon L \to X$ be a line bundle, and $s \colon X \hookrightarrow L$ its zero section. The map $f^* \colon \CK_{p-1,q+1}(X) \to \CK_{p,q}(L)$ is an isomorphism, with inverse $s^*$. The first Chern class of $L$ with values in $\CK$-groups is defined by the formula
\begin{equation}
\label{eq:firstchern}
c_1(L)=s^* \circ s_* \colon \CK_{p,q}(X) \to \CK_{p-1,q+1}(X).
\end{equation}

The $\CK$-groups are naturally endowed with an action of $\Kzl$. The first Chern of $L$ with values in $\Kzl$ (the multiplication by $1-[L^\vee]$) thus provides another way of defining Chern classes of line bundles with values in $\CK$-groups. The next lemma states the expected compatibility between these two constructions. It was proven in \cite[Proposition~6.1]{Cai}, up to composition with $\beta^\infty$ (defined in \ref{sect:by}).

\begin{lemma}
\label{lemm:firstchern}
Let $L$ be a line bundle over a variety $X$. Then $\beta \circ c_1(L)\colon \CK_{p,q}(X) \to \CK_{p,q}(X)$ (see \eqref{eq:firstchern}) coincides with the action of the element $1-[L^\vee] \in \Kzl(X)$. 
\end{lemma}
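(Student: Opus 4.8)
The plan is to write the difference of the two operations as a natural transformation on $\Var/X$, to check that it is killed by the map $\by$, and then to invoke Proposition~\ref{prop:zero} to promote this to genuine vanishing. I fix $\Omega=X$, and for $Z\in\Var/X$ with structure morphism $g\colon Z\to X$ I write $L$ again for $g^*L$ and let $\lambda$ denote the action of $1-[L^\vee]$ on $\CK_{p,q}(Z)$. This action preserves the bidegree, since tensoring a coherent sheaf supported in dimension $\leq p$ by a line bundle again yields such a sheaf; hence both $\beta\circ c_1(L)$ and $\lambda$ are endomorphisms of $\CK_{p,q}(Z)$. Note that since $k=-1$ we have $\Zu=\Z$, so that $\Ck=\CK$ and all the machinery of the previous sections is available integrally. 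Set
\[
t=\beta\circ c_1(L)-\lambda\colon \CK_{p,q}\to\CK_{p,q}.
\]

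First I would check that $t$ is a natural transformation of functors on $\Var/X$; by the remark following Proposition~\ref{prop:zero} it suffices to treat closed embeddings. The transformation $\beta$ is natural by construction, the action $\lambda$ is natural via the projection formula for the $\Kzl$-action, and $c_1(L)=s^*\circ s_*$ is natural because $s_*$ is natural for proper push-forward while $s^*$ satisfies base change against proper push-forward along the cartesian square comparing the zero section of $L$ with that of its pull-back.

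Next I would show that $\by\circ t=0$. Since $\by=\colim{n\geq 0}\beta^{n}$ we have $\by\circ\beta=\by$, hence $\by\circ t=\by\circ c_1(L)-\by\circ\lambda$. The map $\by$ is compatible with proper push-forward, with pull-backs along local complete intersection morphisms, and with the action of $\Kzl$ (Proposition~\ref{prop:properflataction} together with the proposition following Lemma~\ref{lemm:deformby}). Therefore $\by\circ c_1(L)=s^*\circ s_*\circ\by$ is the first Chern class on $K'$-theory precomposed with $\by$, and $\by\circ\lambda$ is the action of $1-[L^\vee]$ on $K'$-theory precomposed with $\by$. That these two operations on $K'$-theory agree is exactly \cite[Proposition~6.1]{Cai} (this is the content of the statement ``up to composition with $\by$'' recalled before the lemma), whence $\by\circ t=0$.

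Finally I would apply Proposition~\ref{prop:zero} with $\mathcal{F}=\id$, which has relative dimension $\leq 1$, and $d=1$, so that its target becomes $\CK_{p,q}$ and $t$ has the required shape; this yields $t=0$, as asserted. The crucial point — and the reason one cannot merely quote \cite{Cai} — is that $\by$ fails to be injective on $\CK_{p,q}$ in general. What saves the argument is the bookkeeping inside Proposition~\ref{prop:zero}: the group $\CK_{p,q}(X)$ is generated by classes pushed forward from subvarieties $Z$ with $\dim Z\leq p$, and on such $Z$ the map $\by\colon\CK_{p,q}(Z)\to\K'_{p+q}(Z)$ \emph{is} injective, so the $\by$-level identity provided by \cite{Cai} suffices. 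Accordingly, the step demanding the most care is the naturality of $c_1(L)=s^*\circ s_*$ under proper push-forward, since this is the only ingredient not already encapsulated in the results of the preceding sections.
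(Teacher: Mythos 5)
Your argument is correct and follows essentially the same strategy as the paper's: form the difference $t=\beta\circ c_1(L)-(1-[L^\vee])\cdot(-)$ as a natural transformation on $\Var/X$, check that $\by\circ t=0$ using the compatibility of $\by$ with push-forwards, Gysin pull-backs and the $\Kzl$-action, and conclude with Proposition~\ref{prop:zero} for $\mathcal{F}=\id$. The only divergence is that where you cite \cite[Proposition~6.1]{Cai} for the $\by$-level identity, the paper reproves it directly from the resolution $0 \to f^*\Mc^\vee \to \Oc_M \to s_*\Oc_Y \to 0$ and the projection formula --- a cosmetic difference, since the paper itself notes that the statement up to composition with $\beta^\infty$ is already in \cite{Cai}.
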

\begin{proof}
Let $f \colon M \to Y$ be a line bundle, and $s \colon Y \hookrightarrow M$ its zero section. Let $\Mc$ be the $\Oc_Y$-module of sections of the line bundle $M$. The exact sequence
\[
0 \to f^*\Mc^\vee \to \Oc_M \to s_*\Oc_Y \to 0
\]
gives the formula $s_*[\Oc_Y]=1-f^*[M^\vee]$ in $\Kzl(M)$, where $s_* \colon \Kzl(Y) \to \Kzl(M)$ is the push-forward along the perfect morphism $s$ \cite[Example~15.1.8]{Ful-In-98}. Let $x \in \Kg{Y}{p+q}$. The projection formula of \cite[\S7, Proposition~2.10]{Qui-72} gives
\begin{align*}
s^*\circ s_*(x)&=s^*\circ s_*\circ s^*\circ f^*(x)\\
&=s^*(s_*[\Oc_Y] \cdot f^*(x))\\
&=s^*((1-f^*[M^\vee])\cdot f^*(x))\\
&=(1-[M^\vee])\cdot x.
\end{align*}
Using Proposition~\ref{prop:properflataction}, we obtain for $y \in \CK_{p,q}(Y)$, 
\begin{equation}
\label{eq:bycone}
\by \circ c_1(M)(y)=\by ((1-[M^\vee])\cdot y).
\end{equation}

One defines a natural transformation $\eta\colon \CK_{p,q} \to \CK_{p,q}$ of functors $\Var/X \to \Ab$ by setting, for $Y \in \Var/X$, and $y \in \CK_{p,q}(Y)$,
\[
\eta^Y(y)= \beta \circ c_1(L\times_X Y)(y)- (1-[L^\vee\times_X Y])\cdot y.
\]
We conclude using \eqref{eq:bycone} and Proposition~\ref{prop:zero} with $\Omega=X$, and $\mathcal{F}=\id$ (which has relative dimension $\leq 1$).
\end{proof}

For $y\in \Kzh(X)$, we denote by $c_n(y)\colon \CK_{p,q}(X) \to \CK_{p-n,q+n}(X)$ the $n$-th Chern class with values in $\CK$-groups \cite[\S 6.7]{Cai}.
\begin{lemma}
\label{lemm:thetaone}
For all $x \in \Kzh(X)$, we have, as maps $\CK_{p,q}(X) \to \CK_{p,q}(X)$,
\[
\theta^{-1}(x)=\sum_{n\geq 0} (-1)^{\rank(x) -n} \cdot \beta^n \circ c_n(\psi^{-1}(x)).
\]
\end{lemma}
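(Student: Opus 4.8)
\emph{Approach.} Both sides of the asserted identity are natural operations on $\Ck$-groups attached to the class $x\in\Kzh(X)$, so the plan is to check that each side turns sums in $\Kzh(X)$ into composites of operations, and then to invoke the splitting principle, reducing everything to a line bundle. It is convenient to rewrite the right-hand side as $(-1)^{\rank(x)}\sum_{n\geq 0}(-\beta)^n\circ c_n(\psi^{-1}(x))$, i.e. (the sign times) the total connective Chern class of $\psi^{-1}(x)$ evaluated at the central operation $-\beta$. The left-hand side is multiplicative because $\theta^{-1}\colon\Kzh\to(\Z\otimes\Kzh)^\times$ is a group homomorphism. For the right-hand side, $\psi^{-1}$ and $\rank$ are additive, while the Whitney sum formula for the Chern classes of \cite{Cai} shows that $y\mapsto\sum_{n\geq 0}(-\beta)^n\circ c_n(y)$ converts sums to composites; here one uses that $\beta$ is central, so that it commutes with the Chern classes and $(-\beta)^i(-\beta)^j=(-\beta)^{i+j}$. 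Since $c_n(y)\colon\Ck_{p,q}\to\Ck_{p-n,q+n}$ vanishes for $n$ large (support dimension), all sums are finite.

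\emph{The case of a line bundle.} For $x=L$ a line bundle we have $\rank(x)=1$, and since $t^{-1}(u)=-u^{-1}$ the defining relation of $\theta^{-1}$ gives $\theta^{-1}(L)=t^{-1}[L^\vee]=-[L^\vee]^{-1}=-[L]$ (the action of $-[L]$, which is indeed a unit). On the other side $\psi^{-1}(L)=[L^\vee]$; as $L^\vee$ is a line bundle, only $c_0=\id$ and $c_1(L^\vee)$ survive, so the right-hand side equals $-\id+\beta\circ c_1(L^\vee)$. By Lemma~\ref{lemm:firstchern}, $\beta\circ c_1(L^\vee)$ is the action of $1-[L]$, whence the right-hand side is $-\id+(1-[L])=-[L]$, matching $\theta^{-1}(L)$.

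\emph{Conclusion.} Both sides being homomorphisms from $(\Kzh(X),+)$ to the group of units, it suffices by the splitting principle to treat $x=\sum_{i=1}^{r}[L_i]$ a sum of line bundles. Then $\psi^{-1}(x)=\sum_i[L_i^\vee]$, and the Whitney formula together with the centrality of $\beta$ gives $\sum_{n\geq 0}(-\beta)^n\circ c_n(\psi^{-1}(x))=\prod_{i=1}^{r}\bigl(\id-\beta\circ c_1(L_i^\vee)\bigr)$. Lemma~\ref{lemm:firstchern} identifies each factor with the action of $[L_i]$, so the right-hand side becomes $(-1)^{r}\prod_i[L_i]$, which is exactly $\theta^{-1}(x)=\prod_i\theta^{-1}(L_i)=\prod_i(-[L_i])$.

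\emph{Main obstacle.} The genuinely delicate point is organizing the formal calculus of operations: establishing that the right-hand side is a homomorphism $(\Kzh(X),+)\to(\text{units})$ relies on the Whitney sum formula and the centrality of $\beta$ in the operational ring of \cite{Cai}, together with the dictionary provided by Lemma~\ref{lemm:firstchern} translating $\beta\circ c_1(L^\vee)$ into the action of $1-[L]$. Once this dictionary and the splitting principle are set up, the remaining computation is routine.
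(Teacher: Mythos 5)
Your proof is correct and follows essentially the same route as the paper: the paper likewise observes that the right-hand side defines a homomorphism from $(\Kzh,+)$ to the units (via the Whitney formula), checks agreement with $\theta^{-1}$ on line bundle classes using Lemma~\ref{lemm:firstchern}, and concludes by the splitting principle. Your write-up merely makes explicit the line-bundle computation $\theta^{-1}(L)=-[L]=-\id+\beta\circ c_1(L^\vee)$ that the paper leaves implicit.
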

\begin{proof}
Consider, for $x \in \Kzh(X)$, the map $\CK_{p,q}(X) \to \CK_{p,q}(X)$
\[
\nu(x)=(-1)^{\rank(x)}\cdot \sum_{n=0}^\infty (-1)^{-n} \cdot \beta^n \circ c_n(\psi^{-1}(x)).
\]
Both $\nu$ and $\theta^{-1}$ send sums of elements to composition of morphisms, and are characteristic classes in the sense of Lemma~\ref{lemm:characteristic}. Since the pull-back along a projective bundle is injective by \cite[Theorem~6.3]{Cai}, and $\nu(L)=\theta^{-1}(L)$ when $L$ is a line bundle by Lemma~\ref{lemm:firstchern}, we can conclude with the splitting principle.
\end{proof}

\begin{lemma}
\label{lemm:pione}
For all $x \in \Kzh(X)$, we have, as maps $\CK_{p,q}(X) \to \CK_{p-1,q+1}(X)$,
\[
\pi^{-1}(x)=\sum_{n\geq 1} (-1)^{\rank(x) -n} \cdot \beta^{n-1} \circ c_n(\psi^{-1}(x)).
\]
\end{lemma}
\begin{proof}
We have, using Lemma~\ref{lemm:thetaone},
\begin{align*}
i\circ \beta \circ \pi^{-1}(x) \circ j &= i \circ j\circ (\theta^{-1}(x)-(-1)^{\rank(x)}\id)\\ 
  &= i \circ (\theta^{-1}(x)-(-1)^{\rank(x)}\id) \circ j\\
  &=i \circ \Big( \beta \circ \sum_{n\geq 1} (-1)^{\rank(x) -n} \cdot \beta^{n-1} \circ c_n(\psi^{-1}(x)) \Big)\circ j.
\end{align*}
Therefore the image under $\by$ of the claimed equality of natural transformations $\CK_{p,q} \to \CK_{p-1,q+1}$ of functors $\Var/X \to \Ab$ holds true. We conclude using Proposition~\ref{prop:zero} with $\Omega=X$, and $\mathcal{F}=\id$ (which has relative dimension $\leq 0$).
\end{proof}

Recall from \ref{par:connective} that we have an exact sequence
\begin{equation}
\label{seq}
\CK_{p-1,1-p} \xrightarrow{\beta} \CK_{p,-p} \to \CH_p \to 0.
\end{equation}
We denote by $\Ch=\Z/2\otimes\CH \colon \Var \to \Ab$ the Chow group functor modulo two.

\begin{theorem}
The operation $\siu$ induces a natural transformation
\[
\Squ \colon \Ch_{p} \to \Ch_{p-1}.
\]
\end{theorem}
\begin{proof}
The operation $\siu\colon\CK_{p,-p}\to \CK_{p-1,1-p}$ commutes with $\beta$ modulo two by Lemma~\ref{lemm:commutes}, hence induces an endomorphism of $\Z/2 \otimes \coker\beta$. This is precisely the Chow group modulo two because of the sequence \eqref{seq}.
\end{proof}

Using this theorem, we see that the operation $\siu$ can be thought of as a lifting of the first Steenrod square to connective $K$-theory with integral coefficients.\\

Cartan formula below follows from Proposition~\ref{prop:extprod}.
\begin{proposition}
\label{prop:Cartan}
Let $X$ and $Y$ be varieties. If $x \in \Ch_p(X)$ and $y \in \Ch_r(Y)$, we have in $\Ch_{p+r-1}(X \times Y)$
\[
\Squ(x \times y)=\Squ(x) \times y +x \times \Squ(y).
\]
\end{proposition}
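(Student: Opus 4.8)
The plan is to obtain the Cartan formula as the mod-two, mod-$\beta$ shadow of the external-product formula of Proposition~\ref{prop:extprod}, specialized to $k=-1$. First I would fix lifts $\tilde{x}\in\CK_{p,-p}(X)$ and $\tilde{y}\in\CK_{r,-r}(Y)$ of representatives of the classes $x$ and $y$. Since, by the theorem preceding this proposition, $\Squ$ is precisely the map induced on $\Ch=\Z/2\otimes\coker\beta$ by $\siu$, it suffices to compute $\siu(\tilde{x}\times\tilde{y})$ in $\CK_{p+r-1,1-p-r}(X\times Y)$ and read off its image in the Chow group modulo two.

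In the notation of Proposition~\ref{prop:extprod} the relevant weights are $q=-p$ and $s=-r$, so the hypothesis $p+q+r+s\leq 1$ reads $0\leq 1$ and is satisfied. Taking $k=-1$ there, I obtain
\[
\siu(\tilde{x}\times\tilde{y})=(-1)^{-r}\cdot\siu(\tilde{x})\times\tilde{y}+(-1)^{-p}\cdot\tilde{x}\times\siu(\tilde{y})+\beta\big(\siu(\tilde{x})\times\siu(\tilde{y})\big).
\]
It then remains to reduce this identity. Passing to coefficients in $\Z/2$ turns the signs $(-1)^{-r}$ and $(-1)^{-p}$ into $1$, while passing further to $\coker\beta$ annihilates the last term, which lies in the image of $\beta$ by inspection. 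Using that the external product on $\CH$ is the one induced by the external product on $\CK$ through the surjection $\CK_{p,-p}\to\CH_p$, the two surviving terms become $\Squ(x)\times y$ and $x\times\Squ(y)$, giving the desired equality in $\Ch_{p+r-1}(X\times Y)$. As a general class is a sum of homogeneous ones and both sides are biadditive, the homogeneous case suffices.

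I do not expect a serious obstacle here, as this is essentially a direct specialization: the only points requiring genuine care are the degree bookkeeping (confirming that $q=-p$, $s=-r$ place $\siu$ in the correct target group and make the constraint of Proposition~\ref{prop:extprod} hold), the verification that the correction term $\beta(\siu(\tilde{x})\times\siu(\tilde{y}))$ truly dies in the cokernel rather than merely mapping into it, and the compatibility of the external products on $\CK$ and $\CH$ under the quotient, so that the reduced terms are the Chow external products named in the statement.
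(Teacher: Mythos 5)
Your proposal is correct and is exactly the argument the paper intends: the paper simply states that the Cartan formula ``follows from Proposition~\ref{prop:extprod}'', and your specialization to $k=-1$, $q=-p$, $s=-r$ (so that $\Zu=\Z$ and the constraint $p+q+r+s\leq 1$ holds), followed by reduction modulo $2$ and modulo the image of $\beta$, is the intended deduction. The three points you flag for care (degree bookkeeping, the $\beta(\siu(\tilde{x})\times\siu(\tilde{y}))$ term dying in $\coker\beta$, and compatibility of external products under the surjection $\CK_{p,-p}\to\CH_p$) are indeed the only things to check, and all hold.
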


\begin{proposition}
\label{prop:pullback}
Let $f \colon Y \to X$ be local complete intersection morphism, with virtual tangent bundle $\Tan_f \in \Kzl(Y)$. Then, as maps $\Ch_p(X) \to \Ch_{p+\rank(\Tan_f)-1}(Y)$,
\[
f^* \circ \Squ=\Squ \circ f^* + c_1(\Tan_f) \circ f^*.
\]
\end{proposition}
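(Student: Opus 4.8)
The plan is to specialize Proposition~\ref{prop:lci}, \eqref{item:lci:sigma} to $k=-1$ and then project the resulting identity onto Chow groups modulo two. First I would reduce to the case where $f$ has constant relative dimension $d$: writing $Y$ as the disjoint union of its connected components, the maps $f^*$, $\Squ$ and the Chern class operator all respect this decomposition, so it suffices to treat each component. For $k=-1$ we have $\Zu=\Z$, hence $\Ck_{p,q}=\CK_{p,q}$, and Proposition~\ref{prop:lci}, \eqref{item:lci:sigma} becomes the identity of maps $\CK_{p,-p}(X)\to\CK_{p+d-1,1-p-d}(Y)$
\[
f^*\circ\siu=\theta^{-1}(\Tan_f)\circ\siu\circ f^*+(-1)^{-p-d}\cdot\pi^{-1}(\Tan_f)\circ f^*,
\]
where $\rank(\Tan_f)=d$.

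The crux is that the two characteristic-class operators collapse once one passes to $\CH_{p+d-1}(Y)$, which by \eqref{seq} is the cokernel of $\beta\colon\CK_{p+d-2,2-p-d}\to\CK_{p+d-1,1-p-d}$. By Lemma~\ref{lemm:thetaone} one has $\theta^{-1}(\Tan_f)=(-1)^d\,\id+\sum_{n\geq1}(-1)^{d-n}\beta^n\circ c_n(\psi^{-1}\Tan_f)$; every term with $n\geq1$ factors through $\beta$, hence vanishes in the cokernel, so $\theta^{-1}(\Tan_f)$ induces $(-1)^d\,\id$ on $\CH_{p+d-1}$. Similarly Lemma~\ref{lemm:pione} gives $\pi^{-1}(\Tan_f)=(-1)^{d-1}c_1(\psi^{-1}\Tan_f)+\sum_{n\geq2}(-1)^{d-n}\beta^{n-1}\circ c_n(\psi^{-1}\Tan_f)$, and again the $n\geq2$ terms factor through $\beta$ and die, leaving the single summand $(-1)^{d-1}c_1(\psi^{-1}\Tan_f)$. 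Here I would invoke that the $\CK$-valued first Chern class is compatible with the projection to Chow (immediate from $c_1(L)=s^*\circ s_*$ together with the splitting principle) and that $\psi^{-1}$ merely negates Chern roots, so that on $\CH_{p+d-1}$ one has $c_1(\psi^{-1}\Tan_f)=-c_1(\Tan_f)$.

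Finally I would reduce modulo two, where the signs $(-1)^d$, $(-1)^{-p-d}$ and $(-1)^{d-1}$ all become $1$. Since $\siu$ induces $\Squ$ on Chow groups modulo two (the theorem that $\siu$ induces $\Squ$, using that $\beta\circ\siu-\siu\circ\beta=2(-1)^{q-1}\,\id$ is divisible by two by Lemma~\ref{lemm:commutes}), and since the $\CK$-level lci pullback and the $\CK$-valued $c_1$ induce the Chow pullback $f^*$ and the Chow Chern class operator $c_1(\Tan_f)$, projecting the displayed identity to $\Ch_{p+d-1}(Y)$ yields exactly
\[
f^*\circ\Squ=\Squ\circ f^*+c_1(\Tan_f)\circ f^*.
\]

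The main obstacle is purely bookkeeping: one must track the bidegrees carefully to be sure that every summand carrying a factor of $\beta$ really lands in the image of the \emph{specific} map $\beta$ whose cokernel computes $\CH_{p+d-1}$, and one must pin down the two compatibilities used at the end—that of the $\CK$-valued Chern class with the projection to Chow, and the sign introduced by $\psi^{-1}$. Once these are in place, the argument is a formal reduction of Proposition~\ref{prop:lci}, \eqref{item:lci:sigma} modulo two.
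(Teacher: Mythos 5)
Your proposal is correct and follows essentially the same route as the paper: specialize Proposition~\ref{prop:lci}, \eqref{item:lci:sigma} to $k=-1$, expand $\theta^{-1}(\Tan_f)$ and $\pi^{-1}(\Tan_f)$ via Lemmas~\ref{lemm:thetaone} and \ref{lemm:pione}, observe that all higher terms factor through $\beta$ and all signs vanish modulo $2$, and replace $c_1\circ\psi^{-1}$ by $c_1$ (the paper's Lemma~\ref{lemm:conepsi}). Your version merely unpacks the paper's one-line computation ``modulo $2$ and $\beta$'' and adds the harmless reduction to constant relative dimension, which the paper leaves implicit.
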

\begin{proof}
We apply Proposition~\ref{prop:lci}, \eqref{item:lci:sigma}, Lemmas~\ref{lemm:thetaone} and \ref{lemm:pione}, and compute modulo $2$ and $\beta$,
\begin{align*} 
 f^* \circ \siu &= \theta^{-1}(\Tan_f) \circ \siu \circ f^* + \pi^{-1}(\Tan_f) \circ f^*\\ 
  &= \siu \circ f^* + c_1(\psi^{-1}(\Tan_f)) \circ f^*.
\end{align*}
This gives the result, in view of Lemma~\ref{lemm:conepsi} below.
\end{proof}
Thus if $X$ is a regular variety with structural morphism $x \colon X \to \point$, we have
\begin{equation}
\label{eq:lci}
\Squ[X]=\Squ \circ x^*[\point]=c_1(\Tan_X) \circ x^*[\point] + x^*\circ \Squ[\point]=c_1(\Tan_X).
\end{equation}

\begin{lemma}
\label{lemm:conepsi}
The first Chern class $c_1$ with values in Chow groups modulo two coincides with $c_1 \circ \psi^{-1}$.
\end{lemma}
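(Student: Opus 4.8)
The plan is to reduce the statement to the case of a line bundle, where it becomes the classical relation $c_1(L^\vee) = -c_1(L)$ together with the observation that the sign is invisible modulo two. Both sides of the asserted equality are additive natural transformations from $\Kzl$ to the group of operations $\Ch_p \to \Ch_{p-1}$: the first Chern class with values in Chow groups is additive in its $K$-theoretic argument by the Whitney sum formula (equivalently, it factors through the determinant homomorphism $\Kzl(X) \to \mathrm{Pic}(X)$), while $\psi^{-1}$ is additive since it is a ring endomorphism of $\Kzl(X)$. Hence, exactly as in the proof of Lemma~\ref{lemm:thetaone}, the splitting principle reduces the verification of $c_1 \circ \psi^{-1} = c_1$ to checking it on the class $[L]$ of an arbitrary line bundle $L$.

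For such a class I would use that $\psi^{-1}([L]) = [L^{\otimes -1}] = [L^\vee]$, so the claim amounts to comparing $c_1([L^\vee])$ with $c_1([L])$. Working with integral Chow groups one has $c_1(L^\vee) = -c_1(L)$, and after tensoring with $\Z/2$ the two classes coincide. Thus $c_1 \circ \psi^{-1}$ and $c_1$ agree on every line bundle, and therefore, by additivity and the splitting principle, on all of $\Kzl(X)$. Unwinding this into the operation framework, the action on $\Ch$ of $c_1(\psi^{-1}(x))$ equals that of $c_1(x)$ for every $x \in \Kzl(X)$, which is what is needed in Proposition~\ref{prop:pullback}.

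The one point deserving care, and the step I would treat as the main obstacle, is the justification of the reduction to line bundles at the level of \emph{operations} on Chow groups modulo two, rather than merely for Chern \emph{classes} in $\CH^1$. Concretely, one must confirm that $x \mapsto c_1(x)$, regarded as a map from $\Kzl$ to operations $\Ch_p \to \Ch_{p-1}$, is genuinely additive so that the splitting principle applies; this is most cleanly seen by noting that $c_1(x)$ depends only on $\det(x) \in \mathrm{Pic}(X)$, that $\det$ is a homomorphism intertwining $\psi^{-1}$ with inversion, and that $\det(\psi^{-1}(x)) = \det(x)^\vee$. Granting this formal input, which is compatible with the Chern-class formalism recorded in Lemma~\ref{lemm:firstchern}, the remainder of the argument is routine.
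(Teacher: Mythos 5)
Your proof is correct and takes essentially the same route as the paper's one-line argument: both $c_1$ and $c_1\circ\psi^{-1}$ are additive characteristic classes, so the splitting principle reduces the claim to line bundles, where $c_1(L^\vee)=-c_1(L)\equiv c_1(L)$ modulo two. Your elaboration of why $c_1$ is additive as an operation (via the determinant) is just the implicit content of the paper's phrase ``both characteristic classes are additive.''
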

\begin{proof}
This follows from the splitting principle.
\end{proof}

\begin{remark}[Cohomological operation]
\label{rem:coh}
When $X$ is a smooth variety, and $x \in \Ch(X)$, we set $\Squu(x)=\Squ(x) + c_1(\Tan_X)\cdot x \in \Ch(X)$. Then
\begin{enumerate}[(a)]
\item \label{it:a} $\Squu(a\cdot b)=\Squu(a)\cdot b + a \cdot \Squu(b).$

\item \label{it:b} If $x \in \Ch^1(X)$, then $\Squu(x)=x^2$.

\item \label{it:c} Let $f \colon Y \to X$ be a morphism of smooth varieties. Then $f$ is a local complete intersection morphism \cite[Corollary~104.4]{EKM}, and $f^* \circ \Squu=\Squu \circ f^*$.
\end{enumerate}

Indeed \eqref{it:c} follows from  Proposition~\ref{prop:pullback}, and \eqref{it:a} is a consequence of \eqref{it:c} and Proposition~\ref{prop:Cartan}. While proving \eqref{it:b}, we may assume that $x=[Z]$ for some integral closed subvariety $i\colon Z\hookrightarrow X$ of codimension one. Since $X$ is locally factorial, $i$ is a regular closed embedding. Using \eqref{eq:lci} and Proposition~\ref{prop:pullback}, we get \eqref{it:b} since 
\begin{align*}
\Squ(x)&=\Squ \circ i_*\circ i^*[X]=i_*\circ \Squ \circ i^*[X]=i_* (i^* \circ \Squ[X]+ c_1(i^*\Oc_X(Z)))\\
&= c_1(\Oc_X(Z))(c_1(\Tan_X) + c_1(\Oc_X(Z)))\\
&= c_1(\Tan_X)\cdot x +x^2.
\end{align*}
\end{remark}

The last property that we discuss is the Adem relation. We are not able to prove the relation $\Squ \circ \Squ=0$, but we shall prove a slightly weaker statement.

Let $X$ be a variety. Consider the topological filtration of $\Kzl'(X)$, whose $p$-th term is given by the image of $\K_0(\Mc_p(X))$, and denote by $\gr_p \K'_0(X)=E^\infty_{p,-p}$ the associated graded group. This defines a functor $\gr_p \K'_0 \colon \Var \to \Ab$.

\begin{proposition}
\label{prop:descends}
The transformation $\Squ$ descends to a natural transformation
\[
\mathfrak{S}_1 \colon \Z/2 \otimes \gr_p \K'_0 \to \Z/2 \otimes \gr_{p-1} \K'_0.
\]
\end{proposition}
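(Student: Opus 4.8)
The plan is to reduce the statement to a single identity relating the operation $\siu$ on connective $K$-theory to a homological Adams operation on the abutment $\K'_0$, and then to track the topological filtration modulo two. \emph{First} I would establish, as maps $\CK_{p,-p}(X)\to\K'_0(X)$, the formula
\[
\by\circ\siu=(\psi_{-1}-(-1)^p\,\id)\circ\by,
\]
where $\psi_{-1}$ is the homological Adams operation of Proposition~\ref{prop:adams} (which is integral here, since $\Z[1/(-1)]=\Z$). To obtain it, I would note that comparing the constructions of Propositions~\ref{prop:varphik} and~\ref{prop:sigmak} through the relation $\lambda=\beta\circ\Lambda$ of Proposition~\ref{prop:sigma} gives $\beta\circ\siu=\varphi_{-1}-(-1)^p\,\id$ on $\CK_{p,-p}$. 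Composing with $\by$, and using the telescoping identity $\by\circ\beta=\by$ together with the relation $\by\circ\varphi_{-1}=\psi_{-1}\circ\by$ already exploited in the proof of Proposition~\ref{prop:lci}, yields the displayed formula.

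\emph{Next} I would extract from it the filtration behaviour of the operator $\Psi_p:=\psi_{-1}-(-1)^p\,\id$. Since $\by\colon\CK_{p,-p}(X)\to\K'_0(X)$ is surjective onto $\K'_0(X)_{(p)}$, whereas the image of $\by\circ\siu$ lies in the image of $\by$ on $\CK_{p-1,1-p}(X)$, which is $\K'_0(X)_{(p-1)}$, the identity above shows that $\Psi_p$ carries $\K'_0(X)_{(p)}$ into $\K'_0(X)_{(p-1)}$, for every $p$. This lets me define $\mathfrak{S}_1$ by lifting: given a class in $\Z/2\otimes\gr_p\K'_0(X)$, I lift it to an element $w\in\K'_0(X)_{(p)}$ and declare $\mathfrak{S}_1$ of the class to be the image of $\Psi_p(w)$ in $\Z/2\otimes\gr_{p-1}\K'_0(X)$.

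\emph{The heart of the argument} is the well-definedness of this recipe modulo two, and this is where the dependence of $\Psi_p$ on the parity of $p$ must be handled; I expect it to be the main obstacle, though it is bookkeeping rather than a deep point. The ambiguity in $w$ lies in $\K'_0(X)_{(p-1)}+2\K'_0(X)_{(p)}$. On the summand $2\K'_0(X)_{(p)}$ one has $\Psi_p\big(2\K'_0(X)_{(p)}\big)\subseteq 2\K'_0(X)_{(p-1)}$, which dies in $\Z/2\otimes\gr_{p-1}\K'_0(X)$. On the summand $\K'_0(X)_{(p-1)}$ I would invoke the filtration statement one degree lower, namely $\Psi_{p-1}\big(\K'_0(X)_{(p-1)}\big)\subseteq\K'_0(X)_{(p-2)}$; since $\Psi_p-\Psi_{p-1}=2(-1)^{p-1}\,\id$, this gives $\Psi_p\big(\K'_0(X)_{(p-1)}\big)\subseteq\K'_0(X)_{(p-2)}+2\K'_0(X)_{(p-1)}$, which again dies in $\Z/2\otimes\gr_{p-1}\K'_0(X)$. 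Hence $\mathfrak{S}_1$ is well defined, and its naturality is inherited from that of $\by$, of $\psi_{-1}$ (Proposition~\ref{prop:adams}, \eqref{adams:proper}), and of the filtration.

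\emph{Finally} I would verify that $\mathfrak{S}_1$ genuinely descends $\Squ$, that is, $\mathfrak{S}_1\circ\varphi=\varphi\circ\Squ$ for the natural surjection $\varphi\colon\Ch_p\to\Z/2\otimes\gr_p\K'_0$ induced by $\by$ after passing to $\coker\beta=\CH_p$ and reducing modulo two. This is immediate from the key identity: if $x\in\CK_{p,-p}(X)$ represents a class $\bar{x}\in\Ch_p$, then $\varphi(\bar{x})$ is the image of $\by(x)$, while $\Psi_p(\by(x))=\by(\siu(x))$, whose image in $\Z/2\otimes\gr_{p-1}\K'_0(X)$ is by definition $\varphi(\Squ(\bar{x}))$. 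Thus the square commutes and $\mathfrak{S}_1$ is the required descent.
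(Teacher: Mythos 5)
Your proof is correct, and it rests on exactly the same key identity as the paper's, namely $\by \circ \siu = (\psi_{-1}-(-1)^{p}\,\id)\circ\by$ on $\CK_{p,-p}$ (the paper states this identity without the derivation you supply via $\beta\circ\siu=\varphi_{-1}-(-1)^{-p}\id$, $\by\circ\beta=\by$ and $\by\circ\varphi_{-1}=\psi_{-1}\circ\by$; your derivation is valid). Where you diverge is in how the descent is executed. The paper works on the ``kernel side'': the identity gives $\siu(\ker\by)\subset\ker\by$, hence $\Squ$ preserves $\omega(\ker\by)$, which by the exact sequence $\ker\by\xrightarrow{\omega}\Ch_p\to\Z/2\otimes\gr_p\K'_0\to 0$ is precisely the kernel of $\Ch_p\to\Z/2\otimes\gr_p\K'_0$; the descent is then formal and no further computation is needed. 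You instead work on the ``image side'': since $\by$ maps $\CK_{p,-p}(X)$ onto $\K'_0(X)_{(p)}$, the identity shows $\Psi_p:=\psi_{-1}-(-1)^p\id$ shifts the topological filtration, and you define $\mathfrak{S}_1$ directly by $w\mapsto\Psi_p(w)$ on lifts. This forces you to handle the parity-dependence of $\Psi_p$ in the well-definedness check, which you do correctly via $\Psi_p-\Psi_{p-1}=2(-1)^{p-1}\id$ together with reduction modulo $2$ --- bookkeeping the paper's kernel argument avoids entirely. What your route buys in exchange is an explicit closed formula for $\mathfrak{S}_1$ as the map induced by the integral homological Adams operation $\psi_{-1}-(-1)^p\id$ on $\gr_p\K'_0$ modulo two, which is not made visible by the paper's shorter argument.
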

\begin{proof}
Let $\omega \colon \CK_{p,-p} \to \Ch_p$ be the natural epimorphism. We have
\[
\by \circ \siu=(\psi_{-1} - (-1)^{-p}\id) \circ \by,
\]
hence $\siu(\ker \by) \subset \ker \by$. It follows that 
\[
\Squ \circ \omega(\ker \by) =\omega \circ \siu(\ker \by) \subset \omega(\ker \by).
\]
The claim is then a consequence of the exact sequence
\[
\ker\by \xrightarrow{\omega} \Ch_p \to \Z/2\otimes \gr_p \K'_0 \to 0. \qedhere
\]
\end{proof}

\begin{lemma}
\label{lemm:last}
We have, as maps $\CK_{p,q} \to \CK_{p-2,q+2}$ and $\CK_{p,q} \to \CK_{p-1,q+1}$,
\[
2 \cdot\siu \circ \siu=0 \quad \text{ and } \quad \siu \circ \siu \circ \beta=\beta \circ \siu \circ \siu=0.
\]
\end{lemma}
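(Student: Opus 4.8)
The plan is to establish both identities by working at the level of $K'$-groups via the map $\by$, and then invoking the injectivity criterion of Proposition~\ref{prop:zero} to transport the vanishing back to $\CK$. The key computational input is Lemma~\ref{lemm:commutes} specialized to $k=-1$, which tells us how $\siu$ fails to commute with $\beta$; combined with the relation $\by\circ\siu=(\psi_{-1}-(-1)^{-p}\id)\circ\by$ used in the proof of Proposition~\ref{prop:descends}, this lets us compute $\by\circ\siu\circ\siu$ entirely in terms of the honest homological Adams operation $\psi_{-1}$ on $\K'$-groups.

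First I would treat $2\cdot\siu\circ\siu$. The idea is that $\psi_{-1}$ squares to $\psi_1=\id$ by Proposition~\ref{prop:adams},~\eqref{adams:compos}, so on the $K'$-level the composite $(\psi_{-1}-(-1)^{-p}\id)$ applied twice telescopes into a multiple of the identity, and the factor of $2$ will kill the surviving term. Concretely I would compute $\by\circ(2\cdot\siu\circ\siu)$ using $\by\circ\siu=(\psi_{-1}-\varepsilon\id)\circ\by$ (with $\varepsilon=(-1)^{-p}=\pm1$ the appropriate sign in each degree), expand, and use $\psi_{-1}\circ\psi_{-1}=\id$ to see that the result vanishes. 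The mild subtlety is bookkeeping the sign $\varepsilon$ as the bidegree shifts from $(p,-p)$ to $(p-1,1-p)$ under the first $\siu$; one must track the weight at which each $\siu$ is evaluated.

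For the second pair of identities I would use Lemma~\ref{lemm:commutes}. Since $\beta\circ\siu-\siu\circ\beta=(1-k)k^{q-1}\id$ becomes $\beta\circ\siu-\siu\circ\beta=2\cdot(-1)^{q-1}\id$ at $k=-1$, the operations $\siu$ and $\beta$ commute \emph{modulo $2$}. Thus $2\cdot\siu\circ\siu=0$ already forces $\siu\circ\siu\circ\beta=\beta\circ\siu\circ\siu$ after pushing the $\beta$ through one copy of $\siu$ at the cost of a multiple of $2$, which is annihilated. Then it remains to show $\beta\circ\siu\circ\siu=0$, and this again follows from the $\by$-computation: $\by\circ\beta=\by$ (by the very definition of $\by=\colim\beta^n$), so $\by\circ\beta\circ\siu\circ\siu=\by\circ\siu\circ\siu$, whose vanishing I will have established, and Proposition~\ref{prop:zero} with $\mathcal F=\id$ upgrades this to the equality of $\CK$-operations.

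The main obstacle I anticipate is not conceptual but organizational: carefully keeping the signs $(-1)^{-p}$ straight across the two successive degree shifts, and confirming that the hypothesis $p+q\leq 1$ needed to invoke Lemma~\ref{lemm:commutes} and Proposition~\ref{prop:sigmak} holds for \emph{both} applications of $\siu$ in the composite $\siu\circ\siu$ on $\CK_{p,-p}$. Since after the first $\siu$ we land in $\CK_{p-1,1-p}$, where the relevant sum of indices is $(p-1)+(1-p)=0\leq 1$, the hypothesis is comfortably satisfied, so the degree conditions pose no real difficulty. The only genuine care is verifying that the factor-of-$2$ relation and the mod-$2$ commutation combine to kill every term, which I expect to be a short and clean verification once the $\by$-level computation is written out.
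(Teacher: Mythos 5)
Your handling of the second pair of identities is essentially sound, and is a mildly different route from the paper's: you get $\by\circ\siu\circ\siu=0$ from $\by\circ\siu=(\psi_{-1}-(-1)^{q}\id)\circ\by$ together with $\psi_{-1}\circ\psi_{-1}=\id$ (the two signs, taken at weights $-p$ and $1-p$, are opposite, so the product telescopes to $\psi_{-1}^2-\id=0$), and since $\beta\circ\siu\circ\siu$ and $\siu\circ\siu\circ\beta$ each drop the first index by exactly one, Proposition~\ref{prop:zero} with $\Fc=\id$, $d=0$ does legitimately upgrade vanishing after $\by$ to vanishing of the operation. (The paper instead proves $\siu\circ\beta\circ\siu=-2(-1)^q\siu$ from $(i\circ\siu\circ j)^2=(\psi_{-1}^{(p,q)}-(-1)^q\id)^2$ and combines this with Lemma~\ref{lemm:commutes}.) One small inaccuracy: commuting $\beta$ past the two copies of $\siu$ costs two correction terms which are multiples of $2\siu$, not of $2\siu\circ\siu$; they cancel because the sign in Lemma~\ref{lemm:commutes} alternates with the weight, not because $2\siu\circ\siu=0$.

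The genuine gap is in the first identity. The composite $\siu\circ\siu$ is a transformation $\Ck_{p,-p}\to\Ck_{p-2,2-p}$, dropping the first index by \emph{two}, and Proposition~\ref{prop:zero} with $\Fc=\id$ does not reach that far: its proof uses that $\by$ is injective on $\Ck_{r,s}(Z)$ only when $r\geq\dim Z-1$, whereas on a generator $z_*\Ck_{p,-p}(Z)$ with $\dim Z=p$ the target $\Ck_{p-2,2-p}(Z)$ lies outside this range --- the kernel of $\by$ there is governed by $\ker\beta$, hence essentially by $E^2_{p,-p}\simeq\CH_p(Z)$, and is nonzero in general. So you cannot pass from $\by\circ(2\siu\circ\siu)=0$ to $2\siu\circ\siu=0$. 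The decisive internal check is that your computation in fact gives $\by\circ\siu\circ\siu=0$ with no factor of $2$ needed; if the injectivity step were available it would prove $\siu\circ\siu=0$, i.e.\ the full Adem relation $\Squ\circ\Squ=0$, which the author explicitly states he is unable to prove. The missing idea is the paper's relation \eqref{eq:sigmaone}, $\siu\circ\beta\circ\siu=-2(-1)^q\cdot\siu$: inserting a $\beta$ in the middle keeps the composite within a single step of the topological filtration, where the $i,j$ formalism is faithful; combining it with Lemma~\ref{lemm:commutes} gives $\beta\circ\siu\circ\siu=0$, and right-composing \eqref{eq:sigmaone} with $\siu$ then yields $2\cdot\siu\circ\siu=0$.
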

\begin{proof}
Recall from Proposition~\ref{prop:adams}, \eqref{adams:compos} that $\psi_{-1}^{(p,q)} \circ \psi^{(p,q)}_{-1}=\id$. We compute, as maps $\K_{p+q}(\Mc_p(-)) \to \K_{p+q}(\Mc_p(-))$, for $p+q \leq 1$,
\begin{align*} 
    i \circ \siu \circ \beta \circ \siu \circ j &= i \circ \siu \circ j \circ i \circ \siu \circ j \\ 
    &=(\psi^{(p,q)}_{-1} - (-1)^q\id)\circ (\psi^{(p,q)}_{-1} - (-1)^q\id)\\
    &=-2(-1)^q \cdot (\psi^{(p,q)}_{-1} - (-1)^q\id)\\
    &=i \circ (-2(-1)^q\cdot\siu) \circ j.
\end{align*}
It follows that, as maps $\CK_{p,q} \to \CK_{p-1,q+1}$, 
\begin{equation}
\label{eq:sigmaone}
\siu \circ \beta \circ \siu=-2(-1)^q\cdot\siu.
\end{equation}
From Lemma~\ref{lemm:commutes}, we obtain, as maps $\CK_{p,q} \to \CK_{p-1,q+1}$,
\[
\beta \circ \siu \circ \siu=\siu \circ \beta \circ \siu + 2(-1)^q\cdot\siu=0,
\]
and the first relation then follows by composing on the right by $\siu$ the relation~\eqref{eq:sigmaone}. The remaining relation can be proved in a similar fashion.
\end{proof}

From the first relation, we see that the image of $\Squ \circ \Squ$ lies inside the image of the two-torsion subgroup under the map $\CH \to \Ch$. More precisely:

\begin{proposition}
The following composite is zero ($E^n_{p,q}$ being Quillen spectral sequence, see \ref{sect:QSS}):
\[
\Ch_p \xrightarrow{\Squ} \Ch_{p-1} \xrightarrow{\Squ} \Ch_{p-2} \to  \Z/2 \otimes E^3_{p-2,2-p}.
\]
\end{proposition}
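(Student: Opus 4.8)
The goal is to show that the composite $\Ch_p \xrightarrow{\Squ} \Ch_{p-1} \xrightarrow{\Squ} \Ch_{p-2} \to \Z/2 \otimes E^3_{p-2,2-p}$ vanishes. The plan is to lift everything to connective $K$-theory and exploit the relations in Lemma~\ref{lemm:last}. Recall that $\Ch_p$ is a quotient of $\Ck_{p,-p}$ via the natural epimorphism $\omega$, and that $\siu$ lifts $\Squ$. The first key observation is Lemma~\ref{lemm:last}: the composite $\siu \circ \siu$ on $\Ck$-groups satisfies $2 \cdot \siu \circ \siu = 0$ and $\beta \circ \siu \circ \siu = 0$. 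Thus while $\Squ \circ \Squ$ need not be zero on the nose, its lift $\siu \circ \siu \colon \Ck_{p,-p} \to \Ck_{p-2,2-p}$ takes values in the two-torsion part and is annihilated by $\beta$.

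First I would understand the target map $\Ch_{p-2} \to \Z/2 \otimes E^3_{p-2,2-p}$ concretely. We have $E^2_{p-2,2-p} \simeq \CH_{p-2}$, so $\Ch_{p-2} \simeq \Z/2 \otimes E^2_{p-2,2-p}$, and the map to $E^3$ is simply reduction modulo the image of the $d^2$ differential (together with passage to the subquotient that $E^3$ represents). The content of the proposition is therefore that the image of $\Squ \circ \Squ$ lands in this image of $d^2$ after reduction modulo two. The strategy is to identify the image of $\siu \circ \siu$ explicitly inside $\Ck_{p-2,2-p}$ and track it through $\omega$ and the passage to $E^3$.

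The main step is to use the relation $\siu \circ \beta \circ \siu = -2(-1)^q \cdot \siu$ from \eqref{eq:sigmaone} (with $q=-p$) together with the long exact sequence relating $\Ck_{p,q}$, the $E^2$-terms, and $\beta$. Since $\beta \circ \siu \circ \siu = 0$, the image of $\siu \circ \siu$ lies in $\ker\beta \cap \Ck_{p-2,2-p}$. By the long exact sequence of Section~\ref{par:connective}, namely $\Ck_{p-1,1-p} \xrightarrow{\beta} \Ck_{p-2,2-p} \to E^2_{p-2,2-p}$ and its continuation, elements killed by $\beta$ come from the connecting map, and the relevant differential governing the passage from $E^2$ to $E^3$ is precisely $d^2$. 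The heart of the argument will be to check that the class of $\siu \circ \siu(x)$ in $\Z/2 \otimes E^2_{p-2,2-p} \simeq \Ch_{p-2}$, which represents $\Squ \circ \Squ(\omega(x))$, becomes a boundary under $d^2$, hence dies in $E^3$. I expect this to follow by combining the two-torsion relation $2 \cdot \siu \circ \siu = 0$ (which forces the class to be $2$-torsion, so that it is the image of a genuinely torsion cycle under $\CH \to \Ch$) with the description of $d^2$ in terms of $\beta$ and the $\Ck$-theoretic operations.

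The hardest part will be the precise bookkeeping identifying $E^3_{p-2,2-p}$ as the correct subquotient and verifying that the $d^2$-differential, when interpreted through the connective $K$-theory long exact sequence, exactly absorbs the image of $\siu \circ \siu$ modulo two. In other words, one must show that the $2$-torsion, $\beta$-annihilated class produced by $\siu \circ \siu$ is, after reduction mod $2$, a $d^2$-boundary rather than merely a $d^2$-cycle. I would handle this by writing $\siu \circ \siu(x)$ explicitly as $\beta$ of something using \eqref{eq:sigmaone} after inverting the sign and the factor $2$ where permissible, being careful that the factor $2$ is only invertible away from $2$; the modulo-two reduction is exactly where the image of $d^2$ enters, since $d^2$ on Quillen's spectral sequence is induced by the composite of a boundary with $\beta$. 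This identification of $d^2$ with the relevant connecting homomorphism composed with $\beta$ is the crux and the one point I would want to verify most carefully.
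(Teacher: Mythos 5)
Your proposal is correct and follows essentially the same route as the paper: the image of $\siu\circ\siu$ lies in $\ker\beta$ by Lemma~\ref{lemm:last}, and the kernel of $\CH_{p-2}\to E^3_{p-2,2-p}$ is exactly the image of $\ker\beta\subset\CK_{p-2,2-p}$, since by exactness of the long exact sequence defining the exact couple the image of $d^2$ is the image under the projection $\CK_{p-2,2-p}\to E^2_{p-2,2-p}$ of $\im\bigl(E^2_{p,1-p}\to\CK_{p-2,2-p}\bigr)=\ker\beta$. The one point you flag as the crux requiring careful verification is precisely this exactness statement and needs no further work; in particular the relation $2\cdot\siu\circ\siu=0$ and the discussion of inverting the factor $2$ play no role in the argument.
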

\begin{proof}
The kernel of the map $\CH_{p-2} \to  E^3_{p-2,2-p}$ is the image of $\ker \beta \subset \CK_{p-2,2-p}$. The statement follows from the relation $\beta \circ \siu \circ \siu=0$ of Lemma~\ref{lemm:last} above.
\end{proof}

\begin{corollary}
The following composite is zero:
\[
\Z/2 \otimes \gr_p \K'_0 \xrightarrow{\mathfrak{S}_1}  \Z/2 \otimes \gr_{p-1}  \K'_0  \xrightarrow{\mathfrak{S}_1} \Z/2 \otimes \gr_{p-2} \K'_0.
\]
\end{corollary}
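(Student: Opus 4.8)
The plan is to reduce the vanishing of $\mathfrak{S}_1\circ\mathfrak{S}_1$ to the single relation $\beta\circ\siu\circ\siu=0$ of Lemma~\ref{lemm:last}. Write $\varphi\colon\Ch_p\to\Z/2\otimes\gr_p\K'_0$ for the natural epimorphism of Proposition~\ref{prop:descends}, so that by construction $\mathfrak{S}_1$ is the morphism induced by $\Squ$ on the quotient, i.e. $\mathfrak{S}_1\circ\varphi=\varphi\circ\Squ$. Composing twice yields $\mathfrak{S}_1\circ\mathfrak{S}_1\circ\varphi=\varphi\circ\Squ\circ\Squ$, and since $\varphi$ is surjective it suffices to prove that $\varphi\circ\Squ\circ\Squ=0$, that is, that the image of $\Squ\circ\Squ$ is contained in $\ker\varphi$.

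First I would lift the question to connective $K$-theory. Let $\omega\colon\CK_{p,-p}\to\Ch_p$ be the natural surjection through which $\Squ$ is defined, so that $\Squ\circ\omega=\omega\circ\siu$ and hence $\Squ\circ\Squ\circ\omega=\omega\circ\siu\circ\siu$. By Proposition~\ref{prop:descends} the kernel of $\varphi$ is precisely $\omega(\ker\by)$, where $\by=\colim{n}\beta^n$. Since $\omega$ is surjective, the inclusion $\im(\Squ\circ\Squ)\subset\ker\varphi$ will follow as soon as I know that $\siu\circ\siu$ sends $\CK_{p,-p}$ into $\ker\by$.

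The remaining step is formal. By Lemma~\ref{lemm:last} we have $\beta\circ\siu\circ\siu=0$, so $\siu\circ\siu$ takes its values in $\ker\beta$; and because $\by$ is the structural map into the filtered colimit $\colim{n}\beta^n$, anything annihilated by $\beta$ is a fortiori annihilated by $\by$, so $\ker\beta\subset\ker\by$. Hence $\siu\circ\siu(\CK_{p,-p})\subset\ker\beta\subset\ker\by$, which gives $\Squ\circ\Squ(\Ch_p)=\omega\circ\siu\circ\siu(\CK_{p,-p})\subset\omega(\ker\by)=\ker\varphi$, and we conclude as above. I do not expect a real obstacle: the only points needing care are the bookkeeping of the two descent identities $\mathfrak{S}_1\circ\varphi=\varphi\circ\Squ$ and $\Squ\circ\omega=\omega\circ\siu$ (together with the surjectivity of $\varphi$ and $\omega$), and the elementary inclusion $\ker\beta\subset\ker\by$. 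This last inclusion is exactly what upgrades the preceding proposition: its target $\Z/2\otimes E^3_{p-2,2-p}$ corresponds to the kernel of $\beta$, whereas the associated graded $\gr_{p-2}\K'_0=E^\infty_{p-2,2-p}$ corresponds to the larger kernel of $\by$.
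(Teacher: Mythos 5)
Your argument is correct and rests on exactly the same ingredients as the paper's: the relation $\beta \circ \siu \circ \siu = 0$ from Lemma~\ref{lemm:last}, the descent identities $\Squ \circ \omega = \omega \circ \siu$ and $\mathfrak{S}_1 \circ \varphi = \varphi \circ \Squ$, and the exact sequence $\ker\by \xrightarrow{\omega} \Ch \to \Z/2 \otimes \gr \K'_0 \to 0$ from Proposition~\ref{prop:descends}. The only (cosmetic) difference is that the paper deduces the corollary from the preceding proposition via the factorization of $\Ch_{p-2} \to \Z/2 \otimes \gr_{p-2}\K'_0$ through $\Z/2 \otimes E^3_{p-2,2-p}$, whereas you bypass $E^3$ and use the inclusion $\ker\beta \subset \ker\by$ directly; both are valid.
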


\section{Torsion in the first Chow group of quadrics}
Let $X, Y$ be two varieties. A \emph{correspondence} (resp.\ \emph{modulo two correspondence}) is an element of $r \in \CH_{\dim X}(X\times Y)$ (resp.\ $r\in\Ch_{\dim X}(X\times Y))$. Let $q\colon X \times Y \to X$ and $p\colon X \times Y \to Y$ be the two projections. Assume that $Y$ is projective and $X$ integral. The \emph{multiplicity} of $r$ is the element of $m$ of $\Z$ (resp.\ $\Z/2$) such that $q_*(r)=m\cdot[X]$.

If $X$ and $Y$ are smooth projective, we define a push-forward $r_*$ by the formula
\[
r_*(x)=p_*(q^*(x) \cdot r).
\]

Let $X$ be a projective variety without zero-cycles of odd degree. We define a homomorphism, by assigning to a cycle the half of its degree, taken modulo two,
\[
\deg/2 \colon \Ch_0(X) \to \Z/2.
\]

\begin{lemma}
\label{lemm:corr}
Let $X$ be a smooth, connected, projective variety without zero-cycles of odd degree, $r \in \Ch(X \times X)$ a modulo two correspondence of multiplicity $m$, and $x \in \Ch_1(X)$. Then we have, in $\Z/2$,
\[
(\deg/2)\circ \Squ \circ r_*(x)=m \cdot (\deg/2) \circ \Squ(x).
\]
\end{lemma}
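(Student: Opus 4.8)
The statement concerns the interaction of the first Steenrod square $\Squ$ with the push-forward $r_*$ along a modulo two correspondence, measured by the degree map $\deg/2$. My strategy is to reduce everything to the Cartan formula (Proposition~\ref{prop:Cartan}), the pull-back formula (Proposition~\ref{prop:pullback}), and the compatibility of $\Squ$ with proper push-forwards (the Theorem stating that $\siu$ induces $\Squ$). The key observation is that $r_*(x) = p_*(q^*(x) \cdot r)$ is built from the two projections $q, p \colon X \times X \to X$ together with intersection product, and each of these ingredients has a known commutation rule with $\Squ$. I expect the computation to produce correction terms involving the tangent bundles of $X$ and of the projections, and the main work will be showing that these correction terms assemble into exactly the tangent-bundle correction appearing in $\Squ$ on $X$, so that everything collapses after applying $\deg/2$.

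\textbf{The steps.} First I would expand $\Squ \circ r_*(x) = \Squ\big(p_*(q^*(x)\cdot r)\big)$. Since $p$ is a proper (projective) morphism, $\Squ$ commutes with $p_*$, giving $p_* \circ \Squ(q^*(x) \cdot r)$. Next I would apply the Cartan formula of Proposition~\ref{prop:Cartan} to the product $q^*(x)\cdot r$; here I should be careful, because Proposition~\ref{prop:Cartan} is stated for external products $\times$, whereas $\cdot$ is the internal intersection product, so I would rewrite the internal product via pull-back along the diagonal (or recall that on the smooth variety $X \times X$ the intersection product is compatible with $\Squu$ as in Remark~\ref{rem:coh}, part~\eqref{it:a}). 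This yields $\Squ(q^*(x)\cdot r) = \Squ(q^*x)\cdot r + q^*x \cdot \Squ(r)$ up to the tangent-bundle twist coming from the difference between $\Squu$ and $\Squ$. Then I would use Proposition~\ref{prop:pullback} to rewrite $\Squ(q^*x) = q^*(\Squ x) + c_1(\Tan_q)\cdot q^*x$, where $\Tan_q$ is the virtual tangent bundle of the projection $q$.

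\textbf{The collapse after $\deg/2$.} At this point I would assemble all the terms, collecting the term $p_*\big(q^*(\Squ x)\cdot r\big) = (\Squ x)_* \text{-type}$ contribution which, via the multiplicity hypothesis $q_*(r)=m\cdot[X]$ and the projection formula, contributes $m\cdot\Squ(x)$ after applying $\deg/2$. The remaining terms all carry a factor that is either $\Squ(r)$, or a Chern class $c_1(\Tan_q)$ or $c_1(\Tan_X)$ of positive codimension. The crucial point is that $\deg/2$ is a degree map, so it annihilates any class whose pushforward $q_*$ (or total degree) is killed by the parity hypothesis, and in particular $\deg/2$ of the extra correction terms must vanish; I expect that the tangent-bundle corrections coming from $\Tan_q$ and from converting $\cdot$ to $\Squu$ cancel because $\Tan_{q} = q^*\Tan_X$ and $\Tan_p = p^* \Tan_X$ combine with the diagonal tangent bundle of $X\times X$ in a way that is absorbed by $\deg/2$ on a variety without odd zero-cycles. \emph{The hard part} will be bookkeeping these tangent-bundle correction terms precisely and verifying that the terms involving $\Squ(r)$ and the Chern-class twists either factor through the kernel of $\deg/2$ or cancel pairwise; this requires using the parity hypothesis (no zero-cycles of odd degree) to see that the half-degree of the correction classes is well-defined and vanishes. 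Once those terms are disposed of, only $m\cdot(\deg/2)\circ\Squ(x)$ survives, which is the claimed identity.
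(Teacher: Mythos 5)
Your overall frame is right — commute $\Squ$ past $p_*$ and reduce to a statement about $q^*(x)\cdot r$ on $X\times X$ — but the route you take from there has a genuine gap, and it is exactly at the place you flag as ``the hard part.'' After applying the Cartan formula for the internal product (which holds for $\Squu$, not $\Squ$) and Proposition~\ref{prop:pullback}, you are left with several correction terms: one involving $q^*x\cdot\Squu(r)$ and several involving $c_1$ of tangent bundles. Your stated reason for discarding them --- that $\deg/2$ kills classes carrying a positive-codimension Chern class factor, by the parity hypothesis --- is not a valid argument: $\deg/2$ is only sensitive to the zero-dimensional component anyway, and a product such as $x\cdot q_*\bigl(p^*c_1(\Tan_X)\cdot r\bigr)$ or $m\cdot c_1(\Tan_X)\cdot x$ is a perfectly good zero-cycle class whose half-degree has no reason to vanish term by term. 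These contributions do not individually die; they must cancel against one another (for instance, the first term of your expansion produces $m\cdot(\deg/2)\Squu(x)$ rather than $m\cdot(\deg/2)\Squ(x)$, and the discrepancy $m\cdot(\deg/2)(c_1(\Tan_X)\cdot x)$ has to be absorbed by the $\Squu(r)$ term). Carrying out that bookkeeping is the actual content of a proof along your lines, and it is missing.

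The paper avoids all of this with one observation you did not exploit: $\deg/2$ is insensitive to proper push-forward, so $(\deg/2)\circ p_*=(\deg/2)=(\deg/2)\circ q_*$ on $\Ch(X\times X)$. One therefore writes $(\deg/2)\circ\Squ\circ p_*(q^*(x)\cdot r)=(\deg/2)\circ\Squ(q^*(x)\cdot r)=(\deg/2)\circ\Squ\circ q_*(q^*(x)\cdot r)$, using only that $\Squ$ commutes with the proper push-forwards $p_*$ and $q_*$, and then the ordinary projection formula gives $q_*(q^*(x)\cdot r)=x\cdot q_*(r)=m\cdot x$ before $\Squ$ ever has to interact with a product or a pull-back. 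No Cartan formula, no tangent-bundle corrections. I would recommend restructuring your argument around this switch of projections rather than attempting the cancellation.
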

\begin{proof}
Let $q,p \colon X \times X\to X$ be the two projections. We have 
\begin{align*} 
(\deg/2)\circ \Squ \circ r_*(x)  &= (\deg/2)\circ \Squ \circ p_*( q^*(x)\cdot r)\\ 
  &= (\deg/2)\circ p_* \circ \Squ ( q^*(x)\cdot r)\\
  &= (\deg/2)\circ \Squ ( q^*(x)\cdot r)\\
  &= (\deg/2)\circ q_* \circ \Squ ( q^*(x)\cdot r)\\
  &= (\deg/2)\circ \Squ \circ q_*(q^*(x)\cdot r)\\
  &= (\deg/2)\circ \Squ ( x \cdot q_*(r))\\
  &= m \cdot (\deg/2)\circ \Squ(x).\qedhere
   \end{align*}
\end{proof}

\begin{proposition}
\label{prop:nonzero}
Let $X$ be an anisotropic smooth projective quadric of dimension $d \geq 1$. Let $r \in \Ch(X \times X)$ be a modulo two correspondence of multiplicity one. 

Then $r_*\Ch_1(X)\neq0$.
\end{proposition}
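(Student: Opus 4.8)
The plan is to show $r_*$ is nonzero by exhibiting a single explicit element of $\Ch_1(X)$ whose image is detected by the invariant $(\deg/2) \circ \Squ$ furnished by Lemma~\ref{lemm:corr}. First I would record the hypotheses needed to apply that lemma: being an anisotropic quadric of dimension $d \geq 1$, the variety $X$ is smooth, connected and projective, and by Springer's theorem it carries no zero-cycle of odd degree, so that $\deg/2 \colon \Ch(X) \to \Z/2$ is defined. Let $h \in \Ch^1(X)$ be the (mod-two) hyperplane class and take as test element $x = h^{d-1} \in \Ch^{d-1}(X) = \Ch_1(X)$.

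The heart of the argument is the computation $(\deg/2) \circ \Squ(h^{d-1}) = 1$, which I would carry out via the cohomological operation $\Squu$ of Remark~\ref{rem:coh}. Since $h \in \Ch^1(X)$, that remark gives $\Squu(h) = h^2$, and its Leibniz rule yields by induction $\Squu(h^{n}) = n\, h^{n+1}$, whence $\Squu(h^{d-1}) = (d-1)\,h^{d}$. Viewing $X$ as a degree-two hypersurface in $\PP^{d+1}$ and using the normal bundle sequence gives $c_1(\Tan_X) = d \cdot h$. Unwinding $\Squu = \Squ + c_1(\Tan_X) \cdot (-)$ then yields, modulo two,
\[
\Squ(h^{d-1}) = (d-1)\,h^{d} - d \cdot h \cdot h^{d-1} = -h^{d} = h^{d}.
\]
As $h^{d}$ is the class of $d$ hyperplane sections, its degree is $\deg X = 2$, so $(\deg/2)(h^{d}) = 1$, proving the claim.

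Finally I would apply Lemma~\ref{lemm:corr} with this $x$ and multiplicity $m = 1$, obtaining
\[
(\deg/2) \circ \Squ \circ r_*(h^{d-1}) = 1 \cdot (\deg/2) \circ \Squ(h^{d-1}) = 1 .
\]
In particular $\Squ(r_*(h^{d-1})) \neq 0$, hence $r_*(h^{d-1}) \neq 0$; since $r_*(h^{d-1}) \in \Ch_1(X)$ by a dimension count, this gives $r_* \Ch_1(X) \neq 0$. The computation is a transport-of-structure argument, so the only genuinely delicate inputs are the two hypotheses feeding Lemma~\ref{lemm:corr}: the absence of odd-degree zero-cycles (Springer's theorem, which is exactly where anisotropy enters) and the Chern class/Steenrod computation pinning down $(\deg/2) \circ \Squ(h^{d-1}) = 1$.
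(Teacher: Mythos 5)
Your proof is correct and follows essentially the same route as the paper: evaluate $(\deg/2)\circ\Squ$ on $r_*(h^{d-1})$ via Lemma~\ref{lemm:corr} (with $m=1$) together with the formula $\Squ = \Squu + c_1(\Tan_X)\cdot(-)$ from Remark~\ref{rem:coh}. The only cosmetic difference is that you take $c_1(\Tan_X)=d\cdot h$ from the normal bundle sequence while the paper quotes $(d+2)\cdot h$; these agree modulo two, so the computation is unaffected.
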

\begin{proof}
Let $h\in\Ch^1(X)$ be the hyperplane class. By \cite[Lemma~78.1]{EKM},
\begin{equation}
\label{eq:c1tan}
c_1(\Tan_X)=(d+2) \cdot h.
\end{equation}
For any integer $n\geq 0$, we have by Remark~\ref{rem:coh},~\eqref{it:a} and \eqref{it:b}
\[
\Squu(h^n)=\Squu(h \cdot h^{n-1})=\Squu(h)\cdot h^{n-1} + h \cdot \Squu(h^{n-1})=h^{n+1} + h \cdot \Squu(h^{n-1}).
\]
By induction on $n$, this gives
\begin{equation}
\label{eq:squuhn}
\Squu(h^n)=n \cdot h^{n+1}. 
\end{equation}
Using Lemma~\ref{lemm:corr}, letting $n=d-1$ in \eqref{eq:squuhn}, and using \eqref{eq:c1tan}, we obtain
\begin{align*}
(\deg/2) \circ \Squ \circ r_*(h^{d-1})&=(\deg/2) \circ \Squ(h^{d-1})\\
&=(\deg/2)(\Squu(h^{d-1})+c_1(\Tan_X)\cdot h^{d-1})\\
&=(\deg/2)( (d-1)\cdot h^d + (d+2)\cdot h^d)\\
&=(\deg/2)(h^d)\\
&=1 \mod 2.
\end{align*}
In particular $r_*(h^{d-1}) \in r_*\Ch_1(X)$ is non-zero.
\end{proof}

\begin{proposition}
\label{prop:torsion}
Let $X$ be an anisotropic smooth projective quadric of dimension $d \geq 3$. Assume that there is a modulo two correspondence $r \in \Ch(X \times X)$ of multiplicity one such that $(r_L)_*\Ch_1(X_L)=0$, for some algebraic closure $L$ of the base field. Then there is a non-zero cycle $\xi \in \CH_1(X)$ such that $2\cdot \xi=0$.
\end{proposition}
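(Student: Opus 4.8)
The plan is to transport the non-vanishing provided by Proposition~\ref{prop:nonzero} over the base field $F$ to the closure $L$, manufacturing an integral one-dimensional cycle that dies over $L$ yet survives modulo two; a transfer argument then shows that it must be torsion.

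I would first record the relevant Chow groups of the split quadric $X_L$. Since $d\geq 3$ we have $1<d/2$, so $\CH_1(X_L)=\Z\cdot l_1$ is infinite cyclic on the class $l_1$ of a line \cite[\S 68]{EKM}, and from $h\cdot l_1=l_0$ together with $h^d=2\,l_0$ (the quadric has degree two) one obtains $h^{d-1}=2\,l_1$ in $\CH_1(X_L)$. In these terms the hypothesis $(r_L)_*\Ch_1(X_L)=0$ reads $(r_L)_*(\overline{l_1})=0$ in $\Ch_1(X_L)=\Z/2$.

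Next I would choose an integral lift $R\in\CH_{\dim X}(X\times X)$ of the modulo two correspondence $r$, so that $\overline{R}=r$, and set $\eta=R_*(h^{d-1})\in\CH_1(X)$. Writing $(R_L)_*(l_1)=c\,l_1$ with $c\in\Z$, reduction modulo two gives $\overline{c}\cdot\overline{l_1}=(r_L)_*(\overline{l_1})=0$, so $c$ is even. Since correspondences commute with flat base change,
\[
\operatorname{res}_L(\eta)=(R_L)_*(2\,l_1)=2c\,l_1=c\cdot\operatorname{res}_L(h^{d-1}),
\]
and hence the cycle $\xi=\eta-c\,h^{d-1}\in\CH_1(X)$ satisfies $\operatorname{res}_L(\xi)=0$. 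Reducing modulo two and using that $c$ is even, $\overline{\xi}=\overline{R_*(h^{d-1})}=r_*(h^{d-1})$, which is non-zero in $\Ch_1(X)$ by Proposition~\ref{prop:nonzero}; in particular $\xi\neq 0$ and its class in $\CH_1(X)/2$ is non-trivial.

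It remains to see that $\xi$ is torsion. Because $\CH_1$ commutes with filtered colimits of the base, $\CH_1(X_L)=\varinjlim_E\CH_1(X_E)$ over the finite subextensions $E/F$ of $L$, so $\operatorname{res}_L(\xi)=0$ already holds over some finite $E$. Applying the transfer $\operatorname{tr}_{E/F}$ and the identity $\operatorname{tr}_{E/F}\circ\operatorname{res}_{E/F}=[E:F]\cdot\id$ yields $[E:F]\cdot\xi=0$; thus $\xi$ is a non-zero torsion cycle whose reduction modulo two is non-zero, so its order is even and a suitable multiple of $\xi$ is a non-zero element annihilated by $2$. The delicate point—the main obstacle—is the parity bookkeeping: everything hinges on the hypothesis being stated over the split quadric $X_L$, where $l_1$ exists and pins down $(R_L)_*(l_1)$ modulo two, for this is precisely what guarantees that subtracting $c\,h^{d-1}$ does not disturb the non-vanishing of $\overline{\xi}$.
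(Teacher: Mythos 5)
Your proof is correct, and it reaches the conclusion by a genuinely different (and in places more explicit) route than the paper. The paper's argument is more abstract: it picks any $\alpha$ with $r_*(\bar\alpha)\neq 0$, runs the inclusion chain $R_*(\CH_1(X)_L)\subset 2R_*\CH_1(X_L)\subset 4\CH_1(X_L)\subset 2\CH_1(X)_L$ using the identity $2\cdot\CH_1(X_L)=\CH_1(X)_L$ of \cite[Corollary~72.2]{EKM} (valid for anisotropic quadrics of dimension $\geq 3$) to produce $\xi$ with $\xi_L=0$, and then invokes Lemma~\ref{lemm:first} to get $2\cdot\xi=0$ on the nose. You instead work with the explicit generator $l_1$ of $\CH_1(X_L)\simeq\Z$ and the relation $h^{d-1}=2l_1$, writing $(R_L)_*(l_1)=c\,l_1$ with $c$ even and correcting by $c\,h^{d-1}$; this bypasses \cite[Corollary~72.2]{EKM} entirely (your only use of anisotropy is through Proposition~\ref{prop:nonzero}), which is a genuine simplification of that step. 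For the torsion conclusion you replace Lemma~\ref{lemm:first} by continuity of Chow groups plus transfer, which only shows that $\xi$ has finite even order and that a suitable multiple of it is the desired non-zero $2$-torsion class; this suffices for the statement as written, though it is weaker than the paper's conclusion that $\xi$ itself satisfies $2\xi=0$ (a fact the paper needs later, when it identifies $\xi$ with the torsion class surviving in $\gr\K_0'(X)$). One small presentational point: the statement of Proposition~\ref{prop:nonzero} only asserts $r_*\Ch_1(X)\neq 0$, whereas you use the sharper fact $r_*(h^{d-1})\neq 0$; that is exactly what the proof of that proposition establishes, so you should cite the proof (or restate the proposition accordingly) rather than the bare statement.
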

\begin{proof}
Since $d \geq 3$ and $X$ is anisotropic, we have by \cite[Corollary~72.2]{EKM},
\begin{equation}
\label{eq:2}
2 \cdot \CH_1(X_L)=\CH_1(X)_L.
\end{equation}
(Indeed by \cite[\S68]{EKM} we have $\CH_1(X_L)=\Z \cdot l_1$, with $2\cdot l_1=h^{d-1} \in \CH_1(X)_L$. This proves one inclusion. To prove the other one, note that $l_1 \not \in \CH_1(X)_L$; for otherwise $h\cdot l_1$ would be a zero-cycle of degree one in $\CH_1(X)_L$, contradicting the anisotropy of $X$.) By Proposition~\ref{prop:nonzero}, we can find a cycle $\alpha \in \CH_1(X)$ such that $r_*(\alpha \mod 2)\neq 0$. If $R$ is an integral lifting of $r_L$, then $R_*(\alpha_L)$ belongs to 
\begin{align*}
R_*(\CH_1(X)_L)&\subset 2 \cdot R_* \CH_1(X_L)&&\text{by \eqref{eq:2}}\\
&\subset 4 \cdot \CH_1(X_L)&&\text{by the hypothesis on $r$}\\
&\subset 2 \cdot \CH_1(X)_L&&\text{by \eqref{eq:2}}.
\end{align*}
It follows that we can find $\xi \in \CH_1(X)$ such that $\xi \mod 2=r_*(\alpha\mod 2)$ (in particular $\xi \neq 0$) and $\xi_L=0$. We conclude using Lemma~\ref{lemm:first} below, for $i=1$.
\end{proof}

\begin{lemma}
\label{lemm:first}
Let $X$ be an anisotropic smooth projective quadric, $\mathfrak{i}_1(X)$ its first Witt index \cite[Notation~25.5]{EKM}. Assume that $\mathfrak{i}_1(X)\geq i$. Let $L$ be an extension of the base field. Then
\[
2 \cdot \ker(\CH_i(X) \to \CH_i(X_L))=0. 
\]
\end{lemma}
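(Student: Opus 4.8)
The plan is to reduce the statement to the case of a splitting field and then to identify the kernel with the torsion subgroup of $\CH_i(X)$. First I would observe that the kernel only grows under enlarging $L$: if $L \subseteq L'$, then restriction to $L'$ factors through restriction to $L$, so $\ker(\CH_i(X) \to \CH_i(X_L)) \subseteq \ker(\CH_i(X) \to \CH_i(X_{L'}))$. Embedding $L$ into an algebraic closure $\Omega$ (which still contains $F$), it therefore suffices to treat $L = \Omega$, so that $X_L$ is split. In that case $\CH_i(X_L)$ is free abelian (a basis being given by powers of the hyperplane class and by classes of linear subspaces), so the kernel of $\CH_i(X) \to \CH_i(X_L)$ is exactly the torsion subgroup $T_i := \mathrm{Tors}\,\CH_i(X)$. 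The lemma thus reduces to showing that the hypothesis $\mathfrak{i}_1(X) \geq i$ forces $T_i$ to be killed by $2$.

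Here I would bring in the first Witt index through the extension $F(X)$. Over $F(X)$ the quadric acquires Witt index $\mathfrak{i}_1(X) \geq i$, hence contains a linear subspace of dimension $i-1$ and is in particular isotropic. I would use the structure theory of isotropic quadrics (the cellular/motivic decomposition coming from a rational point, after shifting indices by the split-off hyperbolic planes) to conclude that $\CH_i(X_{F(X)})$ is torsion-free in the range $i \leq \mathfrak{i}_1(X)$. Granting this, the image of the torsion group $T_i$ under restriction to $F(X)$ is a torsion subgroup of a torsion-free group, hence vanishes; so $T_i \subseteq \ker\big(\CH_i(X) \to \CH_i(X_{F(X)})\big)$, and it is enough to prove that the latter kernel is killed by $2$.

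For this final and hardest step I would analyse restriction to the generic point of $X$ via the localization exact sequence for $\mathrm{pr}_2 \colon X \times X \to X$, which identifies $\CH_i(X_{F(X)})$ with the colimit of $\CH_{i+\dim X}(X \times U)$ over dense open $U \subseteq X$. An element $\xi$ of the kernel is then characterised by the property that $\mathrm{pr}_1^*(\xi) = \xi \times [X]$ is rationally equivalent to a cycle supported over a proper closed subset of the second factor. The factor of $2$ must be extracted from the geometry of the quadric: I would combine the linear subspace of dimension $i-1$ defined over $F(X)$ (provided by $\mathfrak{i}_1(X) \geq i$) with Springer's theorem, which forbids zero-cycles of odd degree on an anisotropic quadric, so that the relevant rational cycles occur only with even multiplicity. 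Assembling these into an explicit correspondence witnessing $2\xi = 0$ is the main obstacle, and it is precisely here that the quadric-specific computations of \cite{EKM}, in the spirit of Karpenko's results on torsion in the Chow groups of quadrics, would be invoked to conclude.
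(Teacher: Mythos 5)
Your opening reductions are sound, and your use of the first Witt index to relate $\CH_i$ of the isotropic quadric to $\CH_0$ of a codimension-$2i$ subquadric is in substance the same step the paper takes (via \cite[Proposition~70.1]{EKM} applied $i$ times together with the injectivity of $\CH_0(X')\to\CH_0(X'_L)$ from \cite[Corollary~71.4]{EKM}). But your final step is a genuine gap, and you say so yourself: the factor of $2$ is never actually produced. The root of the problem is your choice of isotropy field. You pass to the function field $F(X)$, which is not finite over the base field, so there is no corestriction and no identity $\mathrm{cor}\circ\mathrm{res}=[K:F]\cdot\id$ to exploit; you are then driven into the localization sequence for $X\times X\to X$ and an ``explicit correspondence witnessing $2\xi=0$'' that you do not construct. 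Springer's theorem is also not the right tool here: what matters is not the absence of odd-degree points but the presence of a degree-two splitting point.

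The paper's proof sidesteps all of this by taking $K$ to be a \emph{separable quadratic extension} of the base field over which $X$ becomes isotropic. The subquadric argument then shows that $\CH_i(X_K)\to\CH_i(X_L)$ is injective, so $\ker(\CH_i(X)\to\CH_i(X_L))$ is contained in $\ker(\CH_i(X)\to\CH_i(X_K))$; and since $K/F$ has degree $2$, the transfer identity $\mathrm{cor}_{K/F}\circ\mathrm{res}_{K/F}=2\cdot\id$ shows that this kernel is killed by $2$. That one-line transfer argument is precisely the ingredient missing from your write-up; replacing $F(X)$ by such a $K$ would let you delete your entire third paragraph.
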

\begin{proof}
We can assume that $L$ is algebraically closed. Let $K$ be a extension of the base field contained in $L$, such that $X_K$ is isotropic. Let $X' \subset X_K$ be a smooth subquadric (over $K$) of codimension $2i$, given by a quadratic form Witt-equivalent to a quadratic form giving $X_K$. Applying $i$ times  \cite[Proposition~70.1]{EKM}, we obtain an isomorphism $\CH_0(X') \simeq \CH_i(X_K)$ which is compatible with extension of scalars (in case $2i=\dim X$, we have $\CH_i(X_K)=\CH_0(X')\oplus H$, the group $H$ being the free abelian group generated by the $i$-th power of the hyperplane class). Since the map $\CH_0(X') \to \CH_0(X'_L)$ is injective by \cite[Corollary~71.4]{EKM} (and so is $H\to H_L$ in case $2i=\dim X$), so is $\CH_i(X_K) \to \CH_i(X_L)$.

Now we can take for $K$ a separable quadratic extension of the base field, and a transfer argument yields the result.
\end{proof}

In the language of \cite{V}, the conditions of Proposition~\ref{prop:torsion} mean that the Tate motives $\Z$ and $\Z(1)[2]$ are not connected in $\Lambda(X)$. Examples of anisotropic smooth projective quadrics $X$ satisfying this condition include: 
\begin{enumerate}[(i)]
\item Quadrics whose first Witt index $\mathfrak{i}_1$ is greater than one (take $r=(1 \times h^{\mathfrak{i}_1-1})\cdot \pi$, where $\pi$ is the $1$-primordial cycle, see \cite[Theorem~73.26]{EKM}; in characteristic not two, this also follows from \cite[Corollary~3.10, Lemma~4.2]{V}).

\item \label{it:rost} Quadrics with an integral Rost projector $\rho$ \cite[Section~2]{Rostprojectors}: the main (conjecturally only) examples being minimal Pfister neighbours.\\
\end{enumerate}

We now assume that the base field has characteristic different from two. Then the situation~\eqref{it:rost} has been extensively studied using the Steenrod squares. The integer $\dim X +1$ is known to be a power of two (\cite[Theorem~6.1]{IV-Max}, or \cite[Theorem~5.1]{Rostprojectors}), and we have, by \cite[Corollary~8.2]{Rostprojectors},
\begin{equation}
\label{Chxrho}
\rho_*\CH_p(X)=\left\{ \begin{array}{rl}
		  0 &\mbox{ if $p+1$ is not a power of two}, \\
		  \Z &\mbox{ if $p=0$ or $\dim X$},\\
		  \Z/2 &\mbox{ if $p+1$ is a power of two and $p\neq 0, \dim X$}.
       		\end{array}
	\right.
\end{equation}

According to \cite[Section~6]{Kar-minimal}, the image of $\rho_*\CH_p(X)$ in $\gr_p \K'_0(X)$ is 
\begin{equation}
\label{Kxrho}
\left\{ \begin{array}{rl}
		  0 &\mbox{ if $1<p<\dim X$}, \\
		  \Z &\mbox{ if $p=0$ or $\dim X$},\\
		  \Z/2 &\mbox{ if $p=1$}.
       		\end{array}
	\right.
\end{equation}
The base field has characteristic not two in \cite[Section~6]{Kar-minimal}, but this assumption could probably be dropped (since no Steenrod square is used there) --- this would provide another approach to the proof of Proposition~\ref{prop:torsion} for the special case \eqref{it:rost}.

Using \eqref{Kxrho}, we see that the element $\xi$ of Proposition~\ref{prop:torsion} is the only torsion cycle in $\rho_*\CH(X)$ surviving in $\gr \K'_0(X)$. If $\mu$ is the quotient map killing the image modulo two of torsion cycles, then $\mu \circ \Squ(\xi \mod 2)\neq 0$ (this is the content of the proof of Proposition~\ref{prop:nonzero}). Therefore the Rost motive $(X,\rho)$ provides a non-trivial example where the operation $\Squ$ is sufficient to detect cycles vanishing in $\gr \K'_0$ among the torsion cycles.

\begin{remark}
\label{rem:other}
In characteristic different from two, Proposition~\ref{prop:descends} does not generalize to all the Steenrod squares. Indeed it follows from \eqref{Kxrho}, and \cite[Corollary~4.9]{Rostprojectors}, that the morphisms $\mu \circ \Square^{2^i-1}$ do not factor through $\Z/2 \otimes \gr\K'_0(-)$ when $i>1$ (here $\Square^n$ is the $n$-th cohomological Steenrod square).
\end{remark}
\bibliographystyle{alpha} 

\end{document}